\def\chk#1{#1^{\smash{\scalebox{.7}[1.4]{\rotatebox{90}{\textnormal{\guilsinglleft}}}}}}
\numberwithin{equation}{section}
\numberwithin{figure}{section}
\theoremstyle{plain}
\newtheorem{theorem}[equation]{Theorem}
\newtheorem{corollary}[equation]{Corollary}
\newtheorem{proposition}[equation]{Proposition}
\theoremstyle{definition}
\newtheorem{remark}[equation]{Remark}
\newtheorem{example}[equation]{Example}
\newtheorem{definition}[equation]{Definition}
\newtheorem{notation}[equation]{Notation}
\newtheorem{discussion}[equation]{Discussion}
\newenvironment{discussionbox}[1][]{%
    \begin{discussion}[#1]\pushQED{\qed}}{\popQED \end{discussion}}
\newtheorem{observation}[equation]{Observation}
\newtheorem{construction}[equation]{Construction}
\DeclareMathOperator{\Schur}{S}
\DeclareMathOperator{\homology}{H}
\def\RDerChar{\mathbf{R}}
\def\RDer{\@ifnextchar[{\R@Der}{\ensuremath{\RDerChar}}}
\def\R@Der[#1]{\ensuremath{\RDerChar^{#1}}}
\def\theenumi{\@alph\c@enumi}
\def\vdots{\vbox{\baselineskip=3pt \lineskiplimit=0pt 
\kern6pt \hbox{.}\hbox{.}\hbox{.}}} 
\DeclareMathOperator{\Grass}{G}
\newcommand{\GL}{\mathrm{GL}}
\newcommand{\SL}{\mathrm{SL}}
\newcommand{\SP}{\mathrm{Sp}}
\newcommand{\Id}{\mathrm{Id}}
\newcommand{\Mat}{\mathrm{Mat}}
\newcommand{\Sym}{\mathrm{Sym}}
\def\JDiag[#1]{\mathrm{J}}
\title{Free resolutions of some Schubert singularities in the Lagrangian Grassmannian}
\dedicatory{Dedicated to the memory of Professor Robert Steinberg}
\author{Reuven Hodges}
\address{Northeastern University, Boston, Massachusetts. USA.}
\email{hodges.r@husky.neu.edu}
\author{V. Lakshmibai}
\address{Northeastern University, Boston, Massachusetts. USA.}
\email{lakshmibai@neu.edu}
\thanks{The second author was supported by NSA grant H98230-11-1-0197, NSF grant 
0652386.}
\begin{document}

\begin{abstract}

In this paper we construct free resolutions of certain class of closed subvarieties of affine space of symmetric matrices (of a given size). Our class covers the symmetric determinantal varieties (i.e.,  determinantal varieties in the space of symmetric matrices), whose resolutions were first constructed by  J{\'o}zefiak-Pragacz-Weyman\cite{JPWResolutionsDeterminental81}.  Our approach follows the techniques developed in \cite{KLPSResolutionSchSing15}, and uses the geometry of Schubert varieties.  
\end{abstract}
\maketitle

\section{Introduction}
\label{sec:intro}
\numberwithin{equation}{section}

This paper is a sequel to \cite{KLPSResolutionSchSing15}. In \cite {LascouxSyzDet78}, Lascoux constructed a minimal free resolution of the coordinate ring of the determinantal varieties (consisting of $m \times n$ matrices (over $\mathbb{C}$) of rank at most $k$, considered as a closed subvariety of the $mn$-dimensional affine space of all $m \times n$ matrices), as a
module over the $mn$-dimensional polynomial ring (the coordinate ring of the the $mn$-dimensional affine space).

In \cite{KLPSResolutionSchSing15}, the authors construct free resolutions for a larger class of singularities which can be realized as the intersection of a singular Schubert variety and the opposite big cell inside a Grassmannian.

In \cite{JPWResolutionsDeterminental81}, J{\'o}zefiak-Pragacz-Weyman constructed a minimal free resolution of the coordinate ring of the determinantal varieties (in the space of symmetric matrices) as a module over the coordinate ring of the space of symmetric matrices. In this paper we construct free resolutions for a certain class of closed subvarieties of the affine space of symmetric matrices, which includes the symmetric determinantal varieties. The  technique adopted in \cite{KLPSResolutionSchSing15} is algebraic group-theoretic, and we follow this approach. 

We now describe the results of this paper. Let $n$ be a positive integer. Let $V=\mathbb{C}^{2n}$ and let $(\cdot,\cdot)$ be a non-degenerate skew-symmetric bilinear form on $V$. Let $H=\SL(V)$ and $G=\SP(V)=$ $\left\{ Z\in \SL(V)\,\,|\,\, Z\mbox{ leaves the form }(\cdot,\cdot)\mbox{ invariant}\right\}$. We take the matrix of the form, with respect to the standard basis of $V$, to be

\begin{center}
$F=\left[\begin{array}{cc}
0 & \JDiag[n]\\
-\JDiag[n] & 0
\end{array}\right]$
\par\end{center}

\noindent where $\JDiag[n]$ is the anti-diagonal $(1,...,1)$, in this case of size $n$. To simplify our notation we will normally omit specifying the size of $J$ as it will be obvious from the context. We may realize $\SP(V)$ as the fixed point set of the involution $\sigma\,:\, H\rightarrow H$ given by $\sigma(Z)=F(Z^{T})^{-1}F^{-1}$ (cf. \cite{SteinbergLectChev68}).

Denoting by $T_{H}$ and $B_{H}$ the maximal torus in $H$ consisting of diagonal matrices and the Borel subgroup in $H$ consisting of upper triangular matrices, respectively, we have that $T_{H}$ and $B_{H}$ are stable under $\sigma$ and we set $T_{G}=T_{H}^{\sigma}$, $B_{G}=B_{H}^{\sigma}$. It is easily checked that $T_{G}$ is a maximal torus in $G$ and $B_{G}$ is a
Borel subgroup in $G$.

Thus we obtain $W_G\hookrightarrow W_H$ where $W_G, W_H$ denote the Weyl groups of $G,H$ respectively (with respect to $T_G,T_H$ respectively). Further, $\sigma$ induces an involution on $W_H$ such that $W_G=W_H^\sigma$. Thus we obtain $$W_G=\{(a_1 \cdots a_{2n}) \in S_{2n} \mid a_i=2n+1-a_{2n+1-i},\ 1 \leq i \leq 2n \}, $$ where $S_{2n}$ is the symmetric group on $2n$ letters. Thus $w=(a_1\cdots a_{2n}) \in W_G$ is known once $(a_1 \cdots a_n)$ is known.  We shall denote an element $(a_1\cdots a_{2n})$ in $ W_G$ by just $(a_1\cdots a_n)$. Further, for $w\in W_G$, denoting by $X_G(w)$ (respectively $X_H(w)$), the the associated Schubert variety in $G/B_G$ ($H/B_H$), we have that under the canonical inclusion $G/B_G\hookrightarrow H/B_H$,  $X_G(w)=X_H(w)\cap G/B_G$,  scheme-theoretically.

Let $P=P_{\widehat{n}}$, the maximal parabolic subgroup of $G$ corresponding to omitting the simple root $\alpha_n$, the set of simple roots of $G$ being indexed as in \cite{BourbakuEMGAL68}. Let $1\leq k<r\leq n$ be positive integers, and let $w = \mathcal{W}_{k,r}$ (cf. \eqref{notation:parabolicSubgroupsStr}). Our main result (cf. Theorem \ref{theorem:subbundleSp2n}) is a description of the minimal free resolution of the coordinate ring of $Y_P(w) := X_{P}(w) \cap O^{-}_{G/{{P}}}$, the \emph{opposite cell} of $X_{{P}}(w)$, as a module over the coordinate ring of $O^-_{G/P}$. For this, as in \cite{KLPSResolutionSchSing15}, we use the Kempf-Lascoux-Weyman ``geometric technique'' for constructing minimal free resolutions. 

Given a closed subvariety $Y$ of an affine space $\mathbb{A}$, the geometric technique constructs a minimal free resolution of the coordinate ring $\mathbb{C}[Y]$ of $Y$ as a module over the coordinate ring $R$ of $\mathbb{A}$ ($R$ is a polynomial ring). This is achieved by first finding a projective variety $V$ such that the following commutative diagram can be constructed
\begin{equation}
\label{eq:geomtricTech}
\begin{tikzcd}
Z \arrow[hookrightarrow]{r} \arrow[d, "q'"]
& \mathbb{A}  \times V \arrow[d, "q"] \arrow[r] & V\\
Y \arrow[hookrightarrow]{r} 
& \mathbb{A}
\end{tikzcd}
\end{equation}
where $q$ is projection to the first coordinate, and its restriction $q'$ is a proper, birational map from a subbundle $Z$ of the trivial bundle $\mathbb{A} \times V$. Finding a projective variety $V$ such that this diagram may be constructed is in general not trivial, but in our case it is made possible by exploiting the geometry of the Schubert varieties and their opposite cells. Denote the dual of the quotient bundle on $V$ associated to the subbundle $Z$ by $\xi$. Let $\RDer q'_* \mathscr{O}_{Z}$ be the right derived image of the structure sheaf $\mathscr{O}_{Z}$ of $Z$. If the map $q'$ has the property that the induced map $\mathscr{O}_Y \to \RDer q'_* \mathscr{O}_{Z}$ is a quasi-isomorphism then the complex $F_{\bullet}$ defined below is a minimal free resolution of $\mathbb{C}[Y]$.
\begin{center}
$\displaystyle F_i = \bigoplus_{j \geq 0} \homology^j(V, \bigwedge^{i+j} \xi)
\otimes_\mathbb{C} R(-i-j)$
\end{center}
In this expression $R(-i-j)$ refers to shifting the natural grading of the coordinate ring $R$. Note that if $Y$ has rational singularities and the map $q'$ is a desingularization, then the induced map $\mathscr{O}_Y \to \RDer q'_* \mathscr{O}_{Z}$ is guaranteed to be a quasi-isomorphism. Then the minimal free resolution of $\mathbb{C}[Y]$ can be computed so long as the vector bundle cohomology groups in the above expression can be computed.

We employ the geometric technique for the opposite cells $Y_P(w)$, $w = \mathcal{W}_{k,r}$ for $1 \leq k < r \leq n$. To begin, we must first construct the commutative diagram \eqref{eq:geomtricTech} in the case where $Y = Y_P(w)$ and $\mathbb{A} = O^-_{G/P}$.  Let $\tilde{P}$ be the two-step parabolic subgroup $P_{\widehat{r-k},\widehat{n}}$ of $G$, and let $\tilde{w}$ be the minimal representative of $w\tilde{P}$ in $W^{\tilde{P}}$ (that is, the set of minimal coset representatives in $W$, under the Bruhat order, of $W/W_{\tilde{P}}$, where $W_{\tilde{P}}$ is the Weyl group of $\tilde{P}$). Let $w^{\prime}:=(k+1,..,r,n,..,r+1,k,..,1)\in S_n$, the Weyl group of $\GL_n$. Let $Z_{\tilde{P}}(\tilde{w}):= Y_{P}(w)\times_{X_{P}(w)}X_{\tilde{P}}(w) (=\,(O_{G/P}^{-}\times P/\tilde{P})\cap X_{\tilde{P}}(w)\,)$. Then it turns out that $Z_{\tilde{P}}(\tilde{w})$ is smooth (cf. \ref{definition:ZPW}), and is a desingularization of $Y_{P}(w)$. Let $p$ be the map given by the composition $Z_{\tilde{P}}(\tilde{w})\hookrightarrow O_{G/P}^{-}\times P/\tilde{P}\rightarrow P/\tilde{P}$ where the first map is inclusion and the second map is the projection to the second coordinate. We show, in Theorem \ref{theorem:subbundleSp2n}, that $p$ identifies $Z_{\tilde{P}}(\tilde w)$ as a sub-bundle of  the trivial bundle $O_{G/P}^{-} \times X_{P'_{\widehat{r-k}}}(w^{\prime})$ over $X_{P'_{\widehat{r-k}}}(w^{\prime})$, which arises as the restriction (to $X_{P'_{\widehat{r-k}}}(w^{\prime})$) of a certain homogeneous vector-bundle on $\GL_n/P'_{\widehat{r-k}}$. We may then realize the commutative diagram required by the geometric technique as
\begin{equation}
\label{eq:geomtricTechYpW}
\begin{tikzcd}
Z_{\tilde P}(\tilde w) \arrow[hookrightarrow]{r} \arrow[d, "q'"]
& O^-_{G/P} \times X_{P'_{\widehat{r-k}}}(w^{\prime}) \arrow[d, "q"] \arrow[r] 
& X_{P'_{\widehat{r-k}}}(w^{\prime})\\
Y_{P}(w) \arrow[hookrightarrow]{r} 
& O^-_{G/P}
\end{tikzcd}
\end{equation}
Further, as mentioned above, the restriction $q'$ of the projection map $q:O^-_{G/P} \times X_{P'_{\widehat{r-k}}}(w^{\prime}) \rightarrow O^-_{G/P}$ to the subbundle $Z_{\tilde P}(\tilde w)$ is a desingularization of $Y_{P}(w)$. This, in addition to the fact that Schubert varieties have rational singularities, implies that in this case $F_{\bullet}$ is a minimal resolution of the coordinate ring of $Y_{P}(w)$. Thus to complete our result we must now compute the cohomology groups of the vector bundles $\bigwedge^{t} \xi$ on $X_{P'_{\widehat{r-k}}}(w^{\prime})$ for arbitrary $t$. As is easily seen, $X_{P'_{\widehat{r-k}}}(w^{\prime})$ is a Grassmannian, namely, $\GL_r/P''_{\widehat{r-k}}$; further, the bundles $\wedge^t \xi$ (on $\GL_r/P''_{\widehat{r-k}}$) are homogeneous, but are not of Bott-type, namely, they are not completely reducible (so one can not apply Bott-algorithm for computing the cohomology). This can be resolved in two ways. In \cite {OttavianiRubeiQuiversCohVB06} the authors determine the cohomology of general
homogeneous bundles on Hermitian symmetric spaces, and thus their results can be used to determine $H^{\bullet}(V,\wedge^t \xi)$. Alternatively, using a technique from \cite{WeymanCohVBSyz03}, we may compute the resolution of a related space (whose associated homogeneous vector bundle is of Bott-type) from which we retrieve the resolution of the coordinate ring of $Y_P(w)$ as a subcomplex.

We hope to extend the results of this paper to Schubert varieties in the orthogonal Grassmannian. Details will appear in a subsequent paper.

The sections of this paper are laid out as follows. In Section \ref{sec:prelim} the required background, notation, and results on Schubert varieties in the full and partial flag varieties, and the symplectic flag variety are recalled. Then homogeneous vector bundles on flag varieties are discussed, as well as the Bott-algorithm for computing their cohomology. In  Section \ref{sec:desing} we prove the results necessary for constructing the commutative diagram in \eqref{eq:geomtricTechYpW}, and show that $q':Z_{\tilde P}(\tilde w)\rightarrow Y_{P}(w)$ is a desingularization. In Section \ref{sec:freeresolutions} we briefly recall some additional details on the geometric technique. Finally, Section \ref{sec:cohomologyHVBStepTwo} explains two methods for computing the cohomology groups of the homogeneous bundles found in the complex $F_{\bullet}$ in our particular case.

\section{Preliminaries}
\label{sec:prelim}
\numberwithin{equation}{subsection}

In this section we recall notation and results for Schubert varieties in the flag variety and symplectic flag variety. We also review homogeneous vector bundles and the Bott-algorithm for computing their cohomology.

\subsection{Background for Type A}
\label{sec:Preliminaries:BackgroundA}
For proofs and a more in depth introduction see \cite{LakRaghSMT08}.

Let $\GL_N$ be the group of invertible $N\times N$ matrices with entries in $\mathbb{C}$. Let $B_N$ be the subgroup of upper triangular matrices in $\GL_N$, with $B_N^{-}$ the subgroup of lower triangular matrices. We denote by $\mathrm{diag}(t_1,\ldots,t_N)$ the diagonal matrix in $\GL_N$ with entries $t_i \in \mathbb{C}^{\times}$ along the main diagonal. The subgroup of $\GL_N$ consisting of all diagonal matrices will be denoted $T_N$. The \emph{parabolic subgroups} of $\GL_N$ containing $B_N$ may be constructed as follows. For $1 \leq d < N$ we define the maximal parabolic subgroup $P_{\widehat{d}}$ of $\GL_N$ to consist of all invertible $N \times N$ matrices with a block of zeroes of size $N-d \times d$ in the bottom left. Then there is a one-to-one correspondence between subsets of $\{1,\ldots,N\}$ and parabolic subgroups of $\GL_N$ containing $B_N$; for $A \subseteq \{1,\ldots,N\}$ the corresponding parabolic is $P_{\widehat{A}}:=\bigcap_{i\in A} P_{\widehat{d}}$.

The \emph{character group} of $T$ is $X^{*}(T):= \mathrm{Hom}(T,\mathbb{C}^{\times})$. If $\epsilon_i$ is the character that maps the diagonal matrix $\mathrm{diag}(t_1,\ldots,t_N)$ to $t_i$ in $\mathbb{C}$, then the set $\{ \epsilon_i | 1 \leq i \leq N \}$ generates $X^{*}(T)$. The \emph{root system} of $\GL_N$ is the set $R=\{\epsilon_i - \epsilon_j | 1 \leq i,j \leq N \}$, where $\epsilon_i - \epsilon_j$ is the element in $X^{*}(T)$ that maps $\mathrm{diag}(t_1,\ldots,t_N)$ to $t_i t_j^{-1}$ in $\mathbb{C}$. The positive roots of $\GL_N$ are $R^{+}=\{\epsilon_i - \epsilon_j | 1 \leq i < j \leq N \}$, with the negative roots $R^{-}=\{\epsilon_i - \epsilon_j | 1 \leq j < i \leq N \}$. The simple roots of $\GL_N$ are $S = \{\alpha_i := \epsilon_i - \epsilon_{i+1} | 1 \leq i < N \}$. 

The \emph{Weyl group} of $\GL_N$ is generated by the simple reflections $s_i$ for $1 \leq i < N$. We have that $W$ is isomorphic as a group to the symmetric group $S_N$ under the map which sends $s_i$ to the transposition $(i,i+1)$. The one-line notation for elements of $W$ is a sequence $(i_1,\ldots,i_N)$; this corresponds to the permutation that sends $n$ to $i_n$. The length of an element $w \in W$ is the minimal non-negative integer $m$ such that $w=s_{j_1}\cdots s_{j_m}$ in $W$. We define $W_{P_A}$ to be the subgroup of $W$ generated by the simple reflections $s_i$, $i \in A$. Then $W^{P_A}$ is the set consisting of minimal coset representatives under the Bruhat order of $W/W_{P_A}$. The set $W^{P_A}$ can be shown to be in bijection with the set $\{ (a_1,\ldots,a_d) | 1 \leq a_1 < a_2 < \cdots < a_d \leq N \}$.

We refer to $\GL_N / B_N$ as the \emph{full flag variety} and for all parabolic subgroups $P_{\widehat{A}}$, $\GL_N / P_{\widehat{A}}$ is called the \emph{partial flag variety}. The \emph{Grassmannian} $\Grass_{d,N}$ of $d$-dimensional subspaces of $\mathbb{C}^N$ can be identified with $\GL_N / P_{\widehat{d}}$.

Fix a parabolic subgroup $P$. Let $R_P$, $R_P^+$, and $R_P^-$ be the subset of roots, positive roots, and negative roots corresponding to $P$, respectively. For $w \in W$, we define $e_w$ to be the coset $w P$. The \emph{Schubert variety} $X_{P}(w)$ is the $B$-orbit closure of $e_w$ in $\GL_N / P$, with the canonical reduced scheme structure. For concreteness, whenever we write $X_{P}(w)$ we will always mean that $w$ corresponds to the representative of its coset in $W^P$. The $B_N^{-}$-orbit of $e_{id}$ is a dense open subset of $\GL_N / P$ called the \emph{opposite big cell in $\GL_N / P$} and denoted $O^{-}_{\GL_N/ P}$. For a Schubert variety $X_{P}(w)$ in $\GL_N / P$ we define the \emph{opposite cell in $X_{P}(w)$} to be $Y_{\tilde{P}}(w) := X_{P}(w) \cap O^{-}_{\GL_N/P}$. The opposite cell $Y_{\tilde{P}}(w)$ is an open affine subvariety of $X_{P}(w)$ containing $e_{id}$. 

\subsection{Background for Type C}
\label{sec:Preliminaries:BackgroundC}

Below we review the properties of symplectic Schubert varieties relevant to this paper. For a more in-depth introduction the reader may refer to \cite[Chapter 6]{LakRaghSMT08}.

Let $n$ be a positive integer. Let $V=\mathbb{C}^{2n}$
and let $(\cdot,\cdot)$ be a non-degenerate skew-symmetric bilinear form on $V$. Let $H=\SL(V)$ and $G=\SP(V)=$ $\left\{ Z\in \SL(V)\,\,|\,\, Z\mbox{ leaves the form }(\cdot,\cdot)\mbox{ invariant}\right\} $.
We take the matrix of the form, w.r.t the standard basis of $V$,
to be

\begin{center}
$F=\left[\begin{array}{cc}
0 & \JDiag[n]\\
-\JDiag[n] & 0
\end{array}\right]$
\par\end{center}

\noindent where $\JDiag[n]$ is the anti-diagonal $(1,...,1)$, in this case of size $n$.
To simplify our notation we will normally omit specifying the size of $J$ as it will be obvious from the context. We may realize $\SP(V)$ as the fixed point set
of the involution $\sigma\,:\, H\rightarrow H$ given by $\sigma(Z)=F(Z^{T})^{-1}F^{-1}$ (cf. \cite{SteinbergLectChev68}).
That is

\begin{center} 
$\begin{alignedat}{2}G 
& = \{Z\in \SL(V)\,\,|\,\, Z^{T}FZ=F\}\\  
& = \{Z\in \SL(V)\,\,|\,\, F^{-1}(Z^{T})^{-1}F=Z\}\\ 
& = \{Z\in \SL(V)\,\,|\,\, F(Z^{T})^{-1}F^{-1}=Z\}\\ 
& = H^{\sigma} \end{alignedat} $
\par\end{center}

Denote by $T_{H}$ and $B_{H}$ the maximal torus
in $H$ consisting of diagonal matrices and the Borel subgroup in
$H$ consisting upper triangular matrices, respectively. It is easily
seen that $T_{H}$ and $B_{H}$ are stable under $\sigma$ and we
set $T_{G}=T_{H}^{\sigma}$, $B_{G}=B_{H}^{\sigma}$. It is easily
checked that $T_{G}$ is a maximal torus in $G$ and $B_{G}$ is a
Borel subgroup in $G$.

Thus we obtain $$W_G\hookrightarrow W_H$$ where $W_G, W_H$ denote the Weyl groups of $G,H$ respectively (with respect to $T_G,T_H$ respectively).
 Further, $\sigma$ induces an involution on $W_H$:
 $$w=(a_1,\cdots,a_{2n})\in W_H,
 \sigma(w)=(c_1,\cdots,c_{2n}),\,c_i=2n+1-a_{2n+1-i}$$ and
 $$W_G=W_H^\sigma$$Thus we obtain
$$W_G=\{(a_1 \cdots a_{2n}) \in S_{2n} \mid a_i=2n+1-a_{2n+1-i},\ 1 \leq i
\leq 2n \},
$$ where $S_{2n}$ is the symmetric group on $2n$ letters. Thus $w=(a_1\cdots a_{2n}) \in W_G$ is known once $(a_1 \cdots
a_n)$ is known.  We shall denote an element $(a_1\cdots a_{2n})$
in $ W_G$ by just $(a_1\cdots a_n)$. For example, $(4231) \in
S_{4}$ represents $(42) \in W_{G}$ for $G=\SP(4)$.

The involution $\sigma$
induces an involution on $X^{*}(T_H)$, the character group of $T_H$:
$$\chi\in X^{*}(T_H),\,\sigma(\chi)(D)=\chi(\sigma(D)),\,D\in T_H .$$
Let $\epsilon_i, 1\le i\le 2n$ be the character in $X^{*}(T_H)$ that maps
$\mathrm{diag}(t_1,\ldots,t_{2n}) \in T_H$ to $t_i$ in $\mathbb{C}$. We have
$$\sigma(\epsilon_i)=-\epsilon_{2n+1-i}$$ Now it is easily seen
that the under the canonical surjective map
$$\varphi:X^{*}(T_H)\rightarrow X^{*}(T_G)$$ we have
$$\varphi(\epsilon_i)=-
\varphi(\epsilon_{2n+1-i}),\,1\le i\le 2n .$$ Let
$R_H:=\{\epsilon_i-\epsilon_j,1\le i,j\le 2n\}$, the root system
of $H$ relative to $T_H$, and
$R^+_H:=\{\epsilon_i-\epsilon_j,1\le i<j\le 2n\}$, the set of
positive roots relative to $B_H$. We have the following:
\begin{enumerate}
\item $\sigma$ leaves $R_H$ (respectively $R^+_H$) stable. 
\item For $\alpha,\beta\in R_H,\varphi(\alpha)=\varphi(\beta)\Leftrightarrow \alpha=\sigma(\beta).$ \item $\varphi$ is equivariant for the canonical action of $W_G$ on $X^{*}(T_H),X^{*}(T_G)$. 
\item $R_H^\sigma=\{\pm(\epsilon_i-\epsilon_{2n+1-i}),1\le i\le n\}.$
\end{enumerate}

Let $R_G$ be the set of roots of $G$ with respect to
$T_G$ and $R_G^+$ the set of positive roots with respect to $B_G$.
Using the above facts and the explicit nature of the adjoint
representation of $G$ on Lie$\,G$, we deduce that
$$R_G=\varphi(R_H),\,R^+_G=\varphi(R^+_H) .$$ In particular, $R_G$ (respectively $R^+_G$) gets
identified with the orbit space of $R_H$ ($R^+_H$) modulo
the action of $\sigma$. Thus we obtain the following
identification:
\begin{equation*}\label{e:roots.c}
R_G=\{ \pm (\epsilon_i \pm \epsilon_j),\ 1\leq i<j\leq n\} \cup
\{\pm 2\epsilon_i,\ i=1,...,n  \}
\end{equation*}
\begin{equation*}\label{e:pos.roots.c}
R_G^+ = \{  (\epsilon_i \pm \epsilon_j),\ 1\leq i<j\leq n\} \cup
\{ 2\epsilon_i,\ i=1,...,n  \}.
\end{equation*}
The set $S_G$ of simple roots in $R_G^+$ is given by
\begin{equation*}\label{e:sim.roots.c}
 S_G:=\{ \alpha_{i}=\epsilon_i - \epsilon_{i+1},\ 1 \leq i \leq n-1\}
\cup \{\alpha_{n}=2\epsilon_n  \}.
\end{equation*}
 Let us denote the simple reflections in $W_G$ by $\{ s_i,\  1
\leq
 i \leq n \}$, namely, $s_i$ is the reflection with respect to
$\epsilon_i - \epsilon_ {i+1},\  1 \leq i \leq n-1$, and $s_n$ is
reflection with respect to $2\epsilon_n.$ Then we have
\begin{equation}
\label{equation:simpleReflectionsSP2n}
s_i=\begin{cases}r_ir_{2n-i}, &\text{if}\ 1 \leq i \leq n-1\\
                 r_n,             &\text{if}\  i=n
\end{cases}
\end{equation}

where $r_i$ denotes the transposition $(i,i+1)$ in $S_{2n},\   1
\leq i \leq 2n-1.$  

For $w\in W_G$, let us denote $l(w,W_H)$ (resp. $l(w,W_G)$), the length of $w$ as an element of $W_H$ (resp. $W_G$). For $w=(a_1,\cdots,a_{2n})\in W_H$, denote
\begin{equation}
\label{equation:mw}
m(w):=\#\{i\le n\,|\,a_i>n\}.
\end{equation}

\noindent Then for $w=(a_1,\cdots,a_{2n})\in W_G$, we have $l(w,W_G)={\frac{1}{2}}(l(w,W_H)+m(w)).$

\begin{proposition}
\label{proposition:inducedSchemeTheSchubert}
\cite[Proposition 6.2.5.1]{LakRaghSMT08}Let $w\in W_G$. Let $X_G(w)$ (respectively $X_H(w)$) be the associated
Schubert variety in $G/B_G$ ($H/B_H$). Under the canonical inclusion $G/B_G\hookrightarrow H/B_H$, we have $X_G(w)=X_H(w)\cap
G/B_G$. Further, the intersection is scheme-theoretic.
\end{proposition}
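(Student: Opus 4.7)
The plan is to first prove the set-theoretic equality $X_G(w) = X_H(w) \cap G/B_G$ via a Bruhat-order comparison, and then upgrade it to a scheme-theoretic statement by showing the intersection on the right is reduced.

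The containment $X_G(w) \subseteq X_H(w) \cap G/B_G$ is immediate from $B_G \subseteq B_H$ and the fact that the canonical map $G/B_G \hookrightarrow H/B_H$ sends $e_w^G$ to $e_w^H$: taking closures of $B_G\cdot e_w^G \subseteq B_H\cdot e_w^H \cap G/B_G$ inside $G/B_G$ yields the inclusion. For the reverse, I would use the Bruhat cell decomposition. The intersection $X_H(w) \cap G/B_G$ is a closed $B_G$-stable subset of $G/B_G$, so it is a union of $B_G$-Schubert cells $C_G(w')$ with $w' \in W_G$. A cell $C_G(w')$ is contained in $X_H(w) \cap G/B_G$ iff its $T_G$-fixed point, viewed in $H/B_H$, lies in $X_H(w)$, i.e., iff $w' \le w$ in the Bruhat order on $W_H$. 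By the same token, $C_G(w') \subseteq X_G(w)$ iff $w' \le w$ in the Bruhat order on $W_G$. Set-theoretic equality thus reduces to the classical comparison of Bruhat orders under the inclusion $W_G \hookrightarrow W_H$: for $w, w' \in W_G$, $w' \le w$ in $W_H$ iff $w' \le w$ in $W_G$. One proof uses the description \eqref{equation:simpleReflectionsSP2n} of the simple reflections of $W_G$ in terms of those of $W_H$ to lift reduced expressions, combined with the subword characterization of Bruhat order.

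For the scheme-theoretic upgrade, it suffices to show $X_H(w) \cap G/B_G$ is reduced, since it contains the reduced irreducible subscheme $X_G(w)$ with the same support. I would obtain reducedness by invoking Frobenius splitting: by standard results (Mehta--van der Kallen, Ramanan--Ramanathan) one can choose a $B_H$-canonical splitting of $H/B_H$ that is simultaneously $\sigma$-equivariant, using that $\sigma$ preserves $B_H$ and acts on the anticanonical section defining the splitting. Such a splitting compatibly splits both $X_H(w)$ (which is $\sigma$-stable because $w \in W_H^\sigma$) and the $\sigma$-fixed subvariety $G/B_G$. Intersections of compatibly split subschemes are reduced, and reduction modulo $p$ transfers the conclusion to characteristic zero.

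The main obstacle is this last step, in particular producing a $\sigma$-equivariant compatibly split structure on the pair $(X_H(w), G/B_G)$. A more elementary alternative, which is essentially the route of \cite{LakRaghSMT08}, is to use standard monomial theory directly: for a suitable $B_G$-stable ample line bundle $L$ on $H/B_H$, one exhibits the standard monomial basis of $H^0(X_H(w), L)$ indexed by admissible pairs of $W_H$-elements $\le w$, and shows that its restriction to $G/B_G$ sends the pairs supported in $W_G$ to the standard monomial basis of $H^0(X_G(w), L|_{G/B_G})$ while annihilating the remaining ones. This identifies the ideal of $X_H(w) \cap G/B_G$ inside $\mathbb{C}[G/B_G]$ with the ideal of $X_G(w)$, completing the proof.
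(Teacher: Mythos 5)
The paper itself offers no proof of this proposition: it is imported wholesale from \cite[Proposition 6.2.5.1]{LakRaghSMT08}, so there is no internal argument to measure yours against, and the relevant comparison is with the proof in that reference. Your reduction of the set-theoretic equality to the compatibility of Bruhat orders under $W_G\hookrightarrow W_H$ is correct and standard. One caveat: the direction you actually need for $X_H(w)\cap G/B_G\subseteq X_G(w)$ is that $w'\le w$ in $W_H$ forces $w'\le w$ in $W_G$, and that is \emph{not} the direction produced by lifting reduced expressions through \eqref{equation:simpleReflectionsSP2n} and invoking the subword property --- that argument gives the implication from $W_G$ to $W_H$, i.e.\ only reproves the easy containment. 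The implication you need is Deodhar's theorem on Bruhat order for the fixed-point subgroup of a diagram automorphism (or, in this type-$C$ case, a direct check with the one-line-notation criterion); it is true, but it should be cited rather than gestured at, since it is the entire content of the set-theoretic step.

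For the scheme-theoretic statement, your primary route has the gap you yourself flag: producing a $\sigma$-equivariant $B_H$-canonical splitting that compatibly splits $G/B_G$ together with every $X_H(w)$, $w\in W_H^{\sigma}$, is precisely the hard point, and the phrase ``reduction modulo $p$ transfers the conclusion to characteristic zero'' also needs semicontinuity/flatness over an open set of primes to descend reducedness to the generic fibre. Your fallback via standard monomial theory is in substance the proof given in \cite{LakRaghSMT08}: linear independence and spanning of the restricted standard monomials (indexed by admissible pairs) identify the homogeneous coordinate ring of $X_H(w)\cap G/B_G$ with that of $X_G(w)$, which yields reducedness and the scheme-theoretic equality at once. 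So the proposal is an acceptable outline that converges on the cited proof, with the understanding that both the Bruhat-order comparison and the SMT basis theorem are substantial external inputs rather than things established in your sketch.
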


\begin{notation}
\label{notation:parabolicSubgroupsPre}
For the remainder of the paper we fix the following notation. Let $1\leq k<r\leq n$ be positive integers. Let $Q=Q_{\hat{n}}$ to be the parabolic subgroup of $H$ corresponding to omitting the root $\alpha_{n}$ and $P=P_{\hat{n}}$ to be the parabolic subgroup of $G$ corresponding to omitting the root $\alpha_{n}$. Let $\tilde{P}$ be the two-step parabolic subgroup $P_{\widehat{r-k},\widehat{n}}:=P_{\widehat{r-k}} \cap P_{\widehat{n}}$ of $G$. Let $\tilde{Q}$ be the three step parabolic subgroup $Q_{\widehat{r-k},\widehat{n},\widehat{2n-(r-k)}}:=Q_{\widehat{r-k}}\cap Q_{\widehat{n}} \cap Q_{\widehat{2n-(r-k)}}$ in $H$. Note that $P=Q^{\sigma}$ and $\tilde{P}=\tilde{Q}^{\sigma}$. Finally, we will identify $P/\tilde{P}$ with $\GL_n/P'_{\widehat{r-k}}$ where $P'_{\widehat{r-k}}$ is the parabolic subgroup of $\GL_n$ omitting the root $\alpha_{r-k}$.
\end{notation}

\begin{definition}
\label{definition:persymmetric}
A square $m\times m$ matrix $X$ is \emph{persymmetric} if $\JDiag[m]X=X^{T}\JDiag[m]$. Or equivalently if $\JDiag[m]X$ is symmetric.
\end{definition}

\begin{remark}
\label{remark:identificationOMinusGP}
We denote by $\Mat_{n}$ the space of $n\times n$ matrices. Let $K$ be the subgroup of $H$ consisting of matrices of the form

\begin{center}
$\left[\begin{array}{cc}
\Id_{n} & 0\\
Y & \Id_{n}
\end{array}\right],\, Y\in \Mat_{n}$
\par\end{center}
The canonical morphism $H\rightarrow H/Q$ induces a morphism $\psi_H:K\rightarrow H/Q$. We have that $\psi_H$ is an open immersion, and $\psi_H(K)$ gets identified with the \emph{opposite big cell} $O^-_{H/Q}$ in $H/Q$.

\noindent $O_{H/Q}^{-}$ is $\sigma$-stable and we can identify the \emph{opposite big cell} $O_{G/P}^{-}$ as
\begin{center}
$O_{G/P}^{-}= (O_{H/Q}^{-})^{\sigma}=\left\{ z\in K\,|\, \JDiag[n]Y^{T}\JDiag[n]=Y\right\}$\cite[Corollary 6.2.4.3]{LakRaghSMT08}.
\end{center}
So $O_{G/P}^{-}$ is the subspace of $K$ with $Y$ persymmetric. Thus we can identify $O_{G/P}^{-}$ with the space of symmetric $n\times n$ matrices, $\Sym_n$, under the map $O_{G/P}^{-}\to \Sym_n$ given by 
\begin{center}
$\left[\begin{array}{cc}
\Id_{n} & 0\\
Y & \Id_{n}
\end{array}\right]\mapsto JY$.
\end{center}
\end{remark}

\subsection{Opposite Cells in Schubert Varieties in the Symplectic Flag Variety}
\label{sec:Preliminaries:SchubertVarieitiesSP2n}
A matrix $z\in \SL(V)$ with $n\times n$ block form 
\begin{center} $\left[\begin{array}{cc}
A_{n\times n} & C_{n\times n}\\
D_{n\times n} & E_{n\times n}
\end{array}\right]$
\end{center} \noindent is an element of $G$ if and only if $z^{T}Fz=F$,
i.e. if and only if the following conditions hold on the $n\times n$
blocks.\begin{equation}
\label{equation:StructureSP2nAD}
A^{T}\JDiag[n]D=D^{T}\JDiag[n]A
\end{equation}\begin{equation}
\label{equation:StructureSP2nCE}
C^{T}\JDiag[n]E=E^{T}\JDiag[n]C
\end{equation}\begin{equation}
\label{equation:StructureSP2nADCE}
\JDiag[n]=(A^{T}\JDiag[n]E-D^{T}\JDiag[n]C)=(E^{T}\JDiag[n]A-C^{T}\JDiag[n]D)
\end{equation} The following proposition will prove useful throughout the rest of the paper.

\begin{proposition}
\label{proposition:structureOMinusSp2n}
Let $U_{P}^{-}$ be the negative unipotent radical of the parabolic subgroup $P=P_{\widehat{n}}$.
\begin{asparaenum}
\item \label{proposition:structureOMinusSp2nUminus}
\noindent The is a natural identification of $O_{G/P}^{-}$ with $U_{P}^{-}P/P$

\item \label{proposition:structureOMinusSp2ninvertibleA}
\noindent Let
\begin{center}
$z=\left[\begin{array}{cc}
A_{n\times n} & C_{n\times n}\\
D_{n\times n} & E_{n\times n}
\end{array}\right]$
\par\end{center}

be an element in $G$. Then $zP\in O_{G/P}^{-}$ if and only if
the $n\times n$ matrix $A$ is invertible.

\noindent \item \label{proposition:structureOMinusSp2nInverseImage}
\noindent Let $\pi:G/\tilde{P}\rightarrow G/P$ be the natural induced quotient map. Then $\pi^{-1}(O_{G/P}^{-}) \cong O_{G/P}^{-}\times P/\tilde{P}$. The elements of $O_{G/P}^{-}\times P/\tilde{P}$ are all of the form
\begin{center}
$\left[\begin{array}{cc}
A_{n\times n} & 0\\
D_{n\times n} & \JDiag[n](A^{T})^{-1}\JDiag[n]
\end{array}\right] (\textrm{mod}\,\tilde{P})\in G/\tilde{P}$.
\par\end{center}
with two matrices
\[
\begin{bmatrix}
A_{n \times n} & 0_{n \times n} \\
D_{n \times n} & \JDiag[n](A^{T})^{-1}\JDiag[n]
\end{bmatrix} \;\textrm{and}\;
\begin{bmatrix}
A'_{n \times n} & 0_{n \times n} \\
D'_{n \times n} & \JDiag[n](A'^{T})^{-1}\JDiag[n]
\end{bmatrix}
\]
being equivalent $(\textrm{mod}\,\tilde{P})$ if and only if there exists a matrix $q \in P'_{\widehat{r-k}}$ (as defined in Notation \ref{notation:parabolicSubgroupsPre}) such that $A' = Aq$ and $D' = Dq$.

\noindent \item \label{proposition:structureOMinusSp2nSLnModQ}

\noindent The projection map $O_{G/P}^{-}\times P/\tilde{P}\rightarrow P/\tilde{P}$ takes the form
\begin{center}
$\left[\begin{array}{cc}
A_{n\times n} & 0\\
D_{n\times n} & \JDiag[n](A^{T})^{-1}\JDiag[n]
\end{array}\right]$ mod $\tilde{P}\longmapsto A(\textrm{mod}\,P'_{\widehat{r-k}})\in \GL_n/P'_{\widehat{r-k}}\cong P/\tilde{P}$.
\par\end{center}
\end{asparaenum}
\end{proposition}

\begin{proof}
\noindent \eqref{proposition:structureOMinusSp2nUminus}:Let $U_{\alpha}$ be the root subgroup corresponding to the root $\alpha\in R$. Then the subgroup $U_{P}^{-}$ is generated by $U_{-\alpha}$ for $\alpha\in R^{+}\backslash R_{P}^{+}$. Under the canonical projection $G\rightarrow G/P$ the subgroup $U_{P}^{-}$ is mapped isomorphically onto its image $O_{G/P}^{-}$ (cf. \cite[Section 4.4.4]{BilleyLakshmibaiSingularLoci00}). Thus we obtain the identification of $O_{G/P}^{-}$ with $U_{P}^{-}P/P$.

\noindent \eqref{proposition:structureOMinusSp2ninvertibleA}: Suppose that $zP\in O_{G/P}^{-}$. By ~\eqref{proposition:structureOMinusSp2nUminus} this means
that $\exists$ $n\times n$ matrices $A^{\prime},C^{\prime},D^{\prime},E^{\prime}$
such that

\begin{center}
$z_{1}=\left[\begin{array}{cc}
\Id_{n} & 0\\
D^{\prime} & \Id_{n}
\end{array}\right]\in U_{P}^{-}$ and $z_{2}=\left[\begin{array}{cc}
A^{\prime} & C^{\prime}\\
0 & E^{\prime}
\end{array}\right]\in P$ with $z=\left[\begin{array}{cc}
A & C\\
D & E
\end{array}\right]=z_{1}z_{2}.$
\par\end{center}

\noindent Hence $A=A^{\prime}$, and $A^{\prime}$ invertible
implies $A$ invertible. 

\noindent Conversely, suppose $A$
is invertible. Let 
\begin{center}
$z=\left[\begin{array}{cc}
A & C\\
D & E
\end{array}\right]\in G$.
\end{center}
Then $A,C,D,\, E$ satisfy properties ~\eqref{equation:StructureSP2nAD}-\eqref{equation:StructureSP2nCE}. Since  $A$ is invertible we may write
\begin{center}
$z=z_{1}z_{2}$ where $z_{1}=\left[\begin{array}{cc}
\Id_{n} & 0\\
DA^{-1} & \Id_{n}
\end{array}\right]$, $z_{2}=\left[\begin{array}{cc}
A_{\,} & C\\
0 & E-DA^{-1}C_{\,}
\end{array}\right]$
\par\end{center}

\noindent We shall now show that $z_1,z_2 \in G$.
First, we note that ~\eqref{equation:StructureSP2nAD} implies that
\begin{equation}
\label{equation:identitySP2n1}
\JDiag[n](DA^{-1})=(DA^{-1})^{T}\JDiag[n].
\end{equation}

\noindent Then ~\eqref{equation:identitySP2n1} shows that $z_{1}\in U_{P}^{-}$, and hence $z_{1}\in G$. 

Now $z_{1}\in G$ implies $z_{1}^{-1}\in G$, and $z\in G$ by assumption. Hence $z_{2}=zz_{1}^{-1}\in G$. Further, since $A$ is invertible $z_{2}\in P$. Hence the coset $zP=z_{1}P$, which in view of the fact that $z_{1}\in U_{P}^{-}$, implies by part \eqref{proposition:structureOMinusSp2nUminus} that $zP \in O_{G/P}^{-}$.

\noindent \eqref{proposition:structureOMinusSp2nInverseImage}: Let $z\in U_{P}^{-}P\subset G$. Then we can write
$z=z_{1}z_{2}$ uniquely with $z_{1}\in U_{P}^{-}$, $z_{2}\in P$.
Suppose that

\begin{center}
$\left[\begin{array}{cc}
\Id_{n} & 0\\
D_{n\times n} & \Id_{n}
\end{array}\right]\left[\begin{array}{cc}
A_{n\times n} & C_{n\times n}\\
0_{n\times n} & E_{n\times n}
\end{array}\right]=\left[\begin{array}{cc}
\Id_{n} & 0\\
D_{n\times n}^{\prime} & \Id_{n}
\end{array}\right]\left[\begin{array}{cc}
A_{n\times n}^{\prime} & C_{n\times n}^{\prime}\\
0_{n\times n} & E_{n\times n}^{\prime}
\end{array}\right]$
\par\end{center}

\noindent then $A=A^{\prime}$, $C=C^{\prime}$, $DA=D^{\prime}A^{\prime}$
and $DC+E=D^{\prime}C^{\prime}+E^{\prime}$, which yields that $D^{\prime}=D$
(since $A=A^{\prime}$ is invertible), and then $E=E^{\prime}$. Hence
$U_{P}^{-}\times_{\mathbb{C}}P=U_{P}^{-}P$. Thus for any parabolic
subgroup $P^{\prime}\subseteq P$, $U_{P}^{-}\times_{\mathbb{C}}P/P^{\prime}=U_{P}^{-}P/P^{\prime}$.
The asserted isomorphism follows by part \eqref{proposition:structureOMinusSp2nUminus} from taking $P^{\prime}=\tilde{P}$.

\noindent To see the second assertion consider

\begin{center}
$z=\left[\begin{array}{cc}
A_{n\times n} & C_{n\times n}\\
D_{n\times n} & E_{n\times n}
\end{array}\right]\in G$
\par\end{center}

\noindent with $zP\in O_{G/P}^{-}$. Note that the $n\times n$ block matrices satisfy properties ~\eqref{equation:StructureSP2nAD}-\eqref{equation:StructureSP2nADCE} and by ~\eqref{proposition:structureOMinusSp2ninvertibleA} $A$ is invertible.

We have by the first part of ~\eqref{proposition:structureOMinusSp2nInverseImage} that the coset $zP$ is an element of $O_{G/P}^{-}\times P/\tilde{P}$, since $zP\in O_{G/P}^{-}$. 

\noindent\textbf{Claim} We have a decomposition of $z$ in $G$,

\begin{center}
$\left[\begin{array}{cc}
A_{\,} & C\\
D & E_{\,}
\end{array}\right]=y_{1}y_{2}$ where $y_{1}=\left[\begin{array}{cc}
A_{\,} & 0\\
D & \JDiag[n](A^{T})^{-1}\JDiag[n]
\end{array}\right]\in G$, $y_{2}=\left[\begin{array}{cc}
\Id_{n} & A^{-1}C\\
0 & \Id_{n}
\end{array}\right]\in \tilde{P}$.
\par\end{center}

\noindent We first check that $z=y_{1}y_{2}$. We need the following identity
\begin{equation}
\label{equation:identitySP2n3}
\JDiag[n]A^{T}\JDiag[n](E-DA^{-1}C)=\Id_n
\end{equation}
which follows from 
\begin{center}
$\begin{array}{rlr}
\JDiag[n]A^{T}\JDiag[n](E-DA^{-1}C) & = \JDiag[n](A^{T}\JDiag[n]E-A^{T}\JDiag[n]DA^{-1}C) & \\
\; & =\JDiag[n](A^{T}\JDiag[n]E-D^{T}\JDiag[n]AA^{-1}C) & ~\eqref{equation:StructureSP2nAD} \\
\; & =\JDiag[n]\JDiag[n] & ~\eqref{equation:StructureSP2nADCE}\\
\; & =\Id_n & \;\\
\end{array}$
\end{center}

So that
\begin{center}
$\begin{array}{rlr}
DA^{-1}C+\JDiag[n](A^{T})^{-1}\JDiag[n] & = DA^{-1}C+\JDiag[n](A^{T})^{-1}\JDiag[n]\JDiag[n]A^{T}\JDiag[n](E-DA^{-1}C) & ~\eqref{equation:identitySP2n3}\\
\; & =DA^{-1}C+E-DA^{-1}C & \; \\
\; & =E & \;\\
\end{array}$
\end{center}

With this it is easily verified that $z=y_{1}y_{2}$.\\
It is clear that $y_{1}\in G$. To show $y_{2}\in G$ we need to check that $\JDiag[n](A^{-1}C)^{T}\JDiag[n]=A^{-1}C$.
\begin{center}
$\begin{array}{rlr}
(A^{-1}C)^{T}\JDiag[n] & = (A^{-1}C)^{T}\JDiag[n]\JDiag[n]A^{T}\JDiag[n](E-DA^{-1}C) & ~\eqref{equation:identitySP2n3}\\
\; & =C^{T}\JDiag[n](E-DA^{-1}C) & \; \\
\; & =E^{T}\JDiag[n]C-C^{T}\JDiag[n]DA^{-1}C & ~\eqref{equation:StructureSP2nCE}\\
\; & =(E-DA^{-1}C)^{T}\JDiag[n]C & ~\eqref{equation:identitySP2n1}\\
\; & =(E-DA^{-1}C)^{T}\JDiag[n]A\JDiag[n]\JDiag[n]A^{-1}C & \; \\
\; & =(\JDiag[n]A^{T}\JDiag[n](E-DA^{-1}C))^{T}\JDiag[n](A^{-1}C) & \;\\
\; & =\JDiag[n](A^{-1}C) & ~\eqref{equation:identitySP2n3} \\
\end{array}$
\end{center}

\noindent Thus $y_{2}\in G$. It is clear additionally that
$y_{2}\in\tilde{P}$(in fact $y_{2}\in B_{G}$).

\noindent Hence our claim follows and we have

\begin{center}
$\left[\begin{array}{cc}
A_{\,} & C\\
D & E_{\,}
\end{array}\right]=\left[\begin{array}{cc}
A_{\,} & 0\\
D & \JDiag[n](A^{T})^{-1}\JDiag[n]
\end{array}\right](\textrm{mod}\,\tilde{P})$.
\par\end{center}

\noindent In conclusion,
\[
\begin{bmatrix}
A_{n \times n} & 0_{n \times n} \\
D_{n \times n} & \JDiag[n](A^{T})^{-1}\JDiag[n]
\end{bmatrix} \;=\;
\begin{bmatrix}
A'_{n \times n} & 0_{n \times n} \\
D'_{n \times n} & \JDiag[n](A'^{T})^{-1}\JDiag[n]
\end{bmatrix}(\textrm{mod}\,\tilde{P})
\] 
\noindent if and only if there exist matrices $q \in P'_{\widehat{r-k}}$ and
$L \in \Mat_n$ such that \[
\begin{bmatrix} A' & 0_{n \times n} \\ D' & \JDiag[n](A^{T})^{-1}\JDiag[n] \end{bmatrix} =
\begin{bmatrix} A & 0_{n \times n} \\ D & \JDiag[n](A'^{T})^{-1}\JDiag[n] \end{bmatrix} \begin{bmatrix} q & L \\ 0_{n \times n} & \JDiag[n](q^{T})^{-1}\JDiag[n]
\end{bmatrix}, \] 
which, since both $A$ and $A'$ are invertible, holds if and only if $L=0$, $A' = Aq$ and $D' = Dq$.

\noindent \eqref{proposition:structureOMinusSp2nSLnModQ}: Define a map from $P \rightarrow \GL_n$ by
\begin{center}
$\left[\begin{array}{cc}
A & C\\
0 & E
\end{array}\right]\rightarrow A$.
\par\end{center}
This map is clearly surjective. The fact that $\tilde{P}=P_{\widehat{r-k},\widehat{n}}$ precisely implies that this map induces our desired isomorphism $P/\tilde{P} \cong \GL_n/P'_{\widehat{r-k}}$. Further, the element
\begin{center}
$\left[\begin{array}{cc}
A & C\\
D & E
\end{array}\right] (\textrm{mod}\,\tilde{P}) \in O_{G/P}^{-}\times P/\tilde{P}$
\par\end{center}
has a unique decomposition as
\begin{center}
$\left[\begin{array}{cc}
\Id_{n} & 0\\
DA^{-1} & \Id_{n}
\end{array}\right]
\left(\left[\begin{array}{cc}
A_{\,} & C\\
0 & E-DA^{-1}C_{\,}
\end{array}\right](\textrm{mod}\,\tilde{P})\right)$
\par\end{center}
which implies that this element is mapped to $A(\textrm{mod}\,P'_{\widehat{r-k}})$.

\end{proof}

\subsection{Homogeneous Bundles and Representations}
\label{subsection:Preliminaries:homogVectorBundles}

Any parabolic subgroup $Q$ of $\GL_n$ has a Levi decomposition that expresses the parabolic subgroup as a semidirect product of its Levi part $L_Q$ and its unipotent radical $U_Q$. Let $E$ be a finite dimensional vector space that is a right $Q$-module. Any such $E$ induces a vector bundle over $\GL_N / Q$ as follows:
\begin{center}
$\tilde{E}:= \GL_n {\times}^Q E := \GL_n \times E / \sim$
\end{center}
where $\sim$ is the equivalence relation $(g,e)\sim(gq,eq)$ for all $q\in Q$. Then $\pi_E: \tilde{E} \rightarrow \GL_n / Q$ is the map $\pi_E((g,e)) = gQ$ (it is an easy check to verify that this is well-defined). There is an induced left action of $\GL_n$ on $\tilde{E}$ given by $h\cdot(g,e)=(hg,e)$ for $h,g \in \GL_n$ and $e \in E$. The vector bundle $\tilde{E}$ is homogeneous, that is, the map $\pi_E$ is $\GL_n$-equivariant; trivially $\pi_E(h\cdot (g,e)) = h \cdot \pi_E((g,e)) = hgQ$. In fact, any homogeneous vector bundle on $\GL_n / Q$ arises in this way.

In light of this definition, we will call $\tilde{E}$ irreducible if the associated $Q$-module $E$ is irreducible. In the same way we will say $\tilde{E}$ is indecomposable or completely reducible if $E$ is indecomposable or completely reducible, respectively. We will need to use the fact that $E$ is completely reducible if and only if the unipotent radical $U_Q$ acts trivially (cf. \cite[Section~5]{SnowHomogVB} or \cite[Section~10]{OttavianiRHV95}).

We are interested in using the above construction to compute cohomology groups of vector bundles on the Grassmannian $\GL_N / P_{\widehat{d}}$. Let $\lambda=(\lambda_1,\ldots,\lambda_n)$ be a weight given in the $\epsilon$ basis. Then $\lambda$ is called $P_{\widehat{d}}$-dominant if $\lambda_1 \geq \cdots \geq \lambda_d$ and $\lambda_{d+1} \geq \cdots \geq \lambda_n$. The $P_{\widehat{d}}$-dominant weights are in bijection with the irreducible homogeneous vector bundles on $\GL_N / P_{\widehat{d}}$. 

Given a partition $\lambda$, denote the associated Schur functor by $\Schur_{\lambda}$ (cf. \cite[\S 6.1]{FulHarRepTheory91}). Let $\mathcal{R}$ and $\mathcal{Q}$ be the tautological sub-bundle and tautological quotient bundle, respectively, on the Grassmannian $\GL_N / P_{\widehat{d}}$. For a $P_{\widehat{d}}$-dominant weight $\lambda=(\lambda_1,\ldots,\lambda_n)$ the corresponding irreducible homogeneous vector bundle is $V(\lambda):=\Schur_{(\lambda_1,\ldots,\lambda_d)} \mathcal{Q}^* \otimes \Schur_{(\lambda_{d+1},\ldots,\lambda_n)} \mathcal{R}^*$ (cf. ~\cite[\S 4]{WeymanCohVBSyz03}). Note that the construction in ~\cite{WeymanCohVBSyz03} uses left modules which results in the irreducible vector bundle associated $\lambda$ being $\Schur_{(\lambda_1,\ldots,\lambda_d)} \mathcal{R}^* \otimes \Schur_{(\lambda_{d+1},\ldots,\lambda_n)} \mathcal{Q}^*$; in our case we are working with right modules.

\subsection{Bott algorithm} We now give a brief description of the Bott-algorithm for computing the cohomology of irreducible homogeneous vector bundles on $\GL_n/Q$~\cite[Remark 4.1.5]{WeymanCohVBSyz03}. 

Let $\alpha=(\alpha_1,\ldots,\alpha_n)$ be a weight. As in \cite[Remark 4.1.5]{WeymanCohVBSyz03} we define an action of the permutation $\nu_i=(i, i+1)$ on the set of weights in the following way:
\begin{equation}
\label{equation:exchangeAction}
\nu_i \alpha = ((\alpha_1,\ldots,\alpha_{i-1},\alpha_{i+1} - 1, \alpha_{i} + 1,\alpha_{i+2},\ldots,\alpha_n).
\end{equation}

The Bott-algorithm may be applied to our case as follows. For $P_{\widehat{d}}$,
with $1 \leq d \leq n-1$ and let $\lambda=(\lambda_1,\ldots,\lambda_n)$ be a $P_{\widehat{d}}$-dominant weight with associated homogeneous vector bundle $V(\lambda):=\Schur_{(\lambda_1,\ldots,\lambda_d)} \mathcal{Q}^* \otimes \Schur_{(\lambda_{d+1},\ldots,\lambda_n)} \mathcal{R}^*$. We will apply the Bott-algorithm to $\lambda'=(\lambda_{d+1},\ldots,\lambda_n,\lambda_1,\ldots,\lambda_d)$.

If $\lambda'$ is nonincreasing, then $\homology^{0}(\GL_n/P_{\widehat{d}},V(\lambda))=\Schur_{\lambda'}\mathbb{C}^{n}$ and $\homology^{i}(\GL_n/P_{\widehat{d}},V(\lambda))=0$ for $i>0$. Otherwise we start to apply the exchanges of type \eqref{equation:exchangeAction} to $\lambda'$, trying to move smaller numbers on the left to the right. Two possibilities can occur:
\begin{enumerate}[label=(\arabic*)]
\item We apply an exchange of type \eqref{equation:exchangeAction} and it leaves the sequence unchanged. In this case  $\homology^{i}(\GL_n/P_{\widehat{d}},V(\lambda))=0$ for $i\geq0$.
\item After applying j exchanges, we transform $\lambda'$ into a nonincreasing
sequence $\beta$. Then we have $\homology^{i}(\GL_n/P_{\widehat{d}},V(\lambda))=0$ for $i\neq j$ and $\homology^{j}(\GL_n/P_{\widehat{d}},V(\lambda))=\Schur_{\beta}\mathbb{C}^{n}$.
\end{enumerate}

\section{Properties of Schubert Desingularization in Type C}
\label{sec:desing}
\numberwithin{equation}{section}

In this section we prove the results necessary to construct the commutative diagram required by the geometric technique. We will need to make use of the following result.

\begin{proposition}
\label{proposition:typeATangentSpace}
Let $Q$ be a parabolic subgroup of $\SL_{2n}$. Let $\tau\in W^{Q}$. Then the dimension of the tangent space of $X_{Q}(\tau)$ at $e_{id}$ is $\#\{s_{\alpha}|\alpha\in R^{-}\backslash R_{Q}^{-}\,\, and\,\,\tau\geq s_{\alpha}\,\, in\,\, W/W_{Q}\}$.
In particular, $X_{Q}(\tau)$ is smooth if and only if dim$X_{Q}(\tau)=\#\{s_{\alpha}|\alpha\in R^{-}\backslash R_{Q}^{-}\,\, and\,\,\tau\geq s_{\alpha}\,\, in\,\, W/W_{Q}\}$.
\end{proposition}
\noindent See \cite[Chapter 4]{BilleyLakshmibaiSingularLoci00} for a proof.

\begin{notation}
\label{notation:parabolicSubgroupsStr}For an integer $i$ with $1\leq i\leq n$ we define $i^{\prime}=2n+1-i$. Let $1\leq k<r\leq n$. Then 
\begin{center}
$\mathcal{W}_{k,r}=
\begin{cases}
(k+1,...,r,n^{\prime},...,(r+1)^{\prime},k^{\prime},...,1^{\prime})\in W^{P}&\text{if $r<n$}\\
(k+1,...,r,k^{\prime},...,1^{\prime})\in W^{P}&\text{if $r=n$}\\
\end{cases}$
\end{center}
Let $1\leq k<r\leq n$ be integers. Let $w = \mathcal{W}_{k,r}$ with $\tilde{w}$ its minimal representative in $W^{\tilde{P}}$. 
\end{notation}

\begin{proposition} 
\label{proposition:schubertVarietySL2nSmooth}
The Schubert variety $X_{\tilde{Q}}(\tilde{w})$ in $H/\tilde{Q}$ is smooth.
\end{proposition}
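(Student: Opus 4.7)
The plan is to apply Proposition~\ref{proposition:typeATangent}: since $e_{\id}$ lies in the closure of every $B_H$-orbit in $X_{\tilde Q}(\tilde w)$, it suffices to show that the tangent space at $e_{\id}$ has dimension $\ell(\tilde w)$.  Set $s := r-k$.

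First, I would write $\tilde w$ explicitly in one-line notation.  The first $n$ entries of $w \in \mathcal W_{k,r}$ are already increasing, and refining from $P$ to $\tilde P = P_{\widehat{r-k},\widehat n}$ merely splits those $n$ positions into blocks of sizes $s$ and $n-s$, so the minimal $W^{\tilde P}$-representative agrees with $w$ on the first $n$ entries; the last $n$ entries of $\tilde w\in W_G\subset S_{2n}$ are then forced by $a_i + a_{2n+1-i} = 2n+1$.  For $r<n$ one obtains
\[
\tilde w = (\underbrace{k{+}1,\ldots,r}_{A},\,\underbrace{n{+}1,\ldots,2n{-}r,\,2n{+}1{-}k,\ldots,2n}_{B},\,\underbrace{1,\ldots,k,\,r{+}1,\ldots,n}_{C},\,\underbrace{2n{+}1{-}r,\ldots,2n{-}k}_{D}),
\]
with block sizes $s, n-s, n-s, s$ matching $\tilde Q$ and each block monotone, so $\tilde w\in W^{\tilde Q}$; the case $r=n$ is analogous, with $B = (2n{+}1{-}k,\ldots,2n)$.

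Next I would compute $\ell(\tilde w)$ by counting inversions.  Since each block is monotone, only cross-block pairs contribute; inspecting the value ranges, only pairs of positions in blocks $A$--$C$, $B$--$C$, or $B$--$D$ yield inversions, contributing $sk$, $(n-s)^2$, and $ks$ respectively, so $\ell(\tilde w) = 2sk + (n-s)^2$.  For the tangent-space count in Proposition~\ref{proposition:typeATangent}, I would use the tableau criterion for Bruhat order in $W/W_{\tilde Q}$, comparing sorted prefixes of $s_{(i,j)}$ and $\tilde w$ at the block-boundary positions $s, n, 2n-s$.  Organizing transpositions $(i,j)$ (with $i<j$ in distinct $\tilde Q$-blocks) by which block-pair contains them, a case analysis using the restricted value ranges of the blocks of $\tilde w$ should show: $A$--$B$ qualifies precisely when $j\leq r$ (giving $sk$ transpositions), $B$--$C$ always qualifies (giving $(n-s)^2$), $C$--$D$ qualifies exactly when $i$ lies in the last $k$ positions of $C$ (giving $ks$), and $A$--$C$, $A$--$D$, $B$--$D$ never qualify.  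Summing yields tangent-space dimension $2sk + (n-s)^2 = \ell(\tilde w)$, whence Proposition~\ref{proposition:typeATangent} gives smoothness.

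The main obstacle is the subtlety that the block-pair decompositions of the two counts do \emph{not} agree termwise: e.g.\ $A$--$B$ contributes nothing to the inversion count but $sk$ to the tangent count, while $A$--$C$ contributes $sk$ to inversions but $0$ to the tangent count (and similarly for $B$--$D$ versus $C$--$D$).  The totals nonetheless match, thanks to the $\sigma$-symmetric block structure of $\tilde w$; verifying the precise tableau-criterion claims block-pair by block-pair is where the bulk of the combinatorial work lies.
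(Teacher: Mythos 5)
Your approach is correct but genuinely different from the paper's. The paper lifts the problem to the full flag variety $H/B_H$: it identifies the maximal representative $w_{\max}\in W_H$ of the coset $\tilde{w}\tilde{Q}$, observes that $X_{B_H}(w_{\max})$ is the full preimage of $X_{\tilde{Q}}(\tilde{w})$ under the fibration $H/B_H\to H/\tilde{Q}$, invokes the Lakshmibai--Sandhya pattern-avoidance criterion (namely that $w_{\max}$ avoids $4231$ and $3142$) to conclude $X_{B_H}(w_{\max})$ is smooth, and then descends along the smooth fibration. You instead work directly in $H/\tilde{Q}$ and verify the tangent-space criterion of Proposition~\ref{proposition:typeATangent} at $e_{\id}$, counting inversions of $\tilde{w}$ on one side and transpositions $s_{(i,j)}$ bounded above by $\tilde{w}$ in $W/W_{\tilde{Q}}$ on the other. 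I spot-checked your block-pair claims: the explicit one-line form of $\tilde{w}$ is right, the inversion total $2sk+(n-s)^2$ is right, and the tableau-criterion analysis does come out as you state (the $l=s$ constraint gives $j\le r$ for $A$--$B$; the $l=2n-s$ constraint gives $i>2n-r$ for $C$--$D$; the $l=s$ constraint kills $A$--$C$ and $A$--$D$, and the $l=2n-s$ constraint kills $B$--$D$), so the totals agree. The trade-off is clear: the paper's route is shorter and conceptually cleaner but requires the pattern-avoidance theorem and the explicit $w_{\max}$; yours is more elementary and self-contained but carries the combinatorial burden that you yourself flag, which must be written out in full for a complete proof. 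Also note that your observation that the two totals match only after regrouping across block-pairs (e.g.\ $A$--$B$ vs.\ $A$--$C$) is a genuine subtlety absent from the paper's argument, and reflects precisely the $\sigma$-symmetry $i\leftrightarrow 2n+1-i$ of $\tilde{w}$.
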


\begin{proof}
Let $w_{max}\in W_{H}(=S_{2n})$ be the
maximal representative of $\tilde{w}$. Then we claim that
\begin{center}
$w_{max}=
\begin{cases}
([r,k+1][1^{\prime},k^{\prime}][(r+1)^{\prime},n^{\prime}][n,(r+1)][k,1][(k+1)^{\prime},r^{\prime}])&\text{if $r<n$}\\
([r,k+1][1^{\prime},k^{\prime}][k,1][(k+1)^{\prime},r^{\prime}])&\text{if $r=n$}\\
\end{cases}$
\end{center} 
To verify our claim we must show that $w_{max}$ is the maximal element of $W_H$ such that $X_{P_{\hat{i}}}(w_{max})=X_{P_{\hat{i}}}(\tilde{w})$ for $i=r-k,n,2n-(r-k)$. This is immediate from the fact that if $\tau=(a_{1},...,a_{2n})\in W_{H}$ and $\tau^{\prime}\in W^{P_{\hat{i}}}$ is the sequence with $a_1,\ldots,a_i$ in increasing order, then $X_{P_{\hat{i}}}(\tau)=X_{P_{\hat{i}}}(\tau^{\prime})$ for $1\leq i\leq2n$.

Thus $X_{B_{H}}(w_{max})$ is the inverse image of $X_{\tilde{Q}}(\tilde{w})$
under the natural morphism $H/B_{H}\rightarrow H/\tilde{Q}$. As $w_{max}$
is a $4231$ and $3142$ avoiding element of $W_{H}$ we have that
$X_{B_{H}}(w_{max})$ is nonsingular (cf. \cite[\S 8.1.1]{BilleyLakshmibaiSingularLoci00}). Since
the morphism $H/B_{H}\rightarrow H/\tilde{Q}$ has nonsingular fibers
(namely $\tilde{Q}/B_{H}$) we have $X_{\tilde{Q}}(\tilde{w})$ must
be smooth.
\end{proof}

\begin{proposition} 
\label{proposition:schubertVarietySP2nSmooth}
The Schubert variety $X_{\tilde{P}}(\tilde{w})$ in $G/\tilde{P}$ is smooth.
\end{proposition}

\begin{proof} Let $w_{max}$ be as in the proof of Proposition \ref{proposition:schubertVarietySL2nSmooth}. Then $w_{max}$ is in $W_G$ and $X_{B_{G}}(w_{max})$ is the inverse image of $X_{\tilde{P}}(\tilde{w})$ under the natural map $G/B_{G}\rightarrow G/\tilde{P}$. We claim $X_{B_{G}}(w_{max})$ is smooth.

Note that the claim implies the required result (since, the canonical morphism $G/B_G\rightarrow G/\tilde{P}$ is a fibration with nonsingular fibers (namely, $\tilde{P}/B_G$)). To prove the claim, as seen in the proof of Proposition \ref{proposition:schubertVarietySL2nSmooth}, we have that  $X_{B_{H}}(w_{max})$ is smooth. We conclude the smoothness of $X_{B_{G}}(w_{max})$ using the following two formulas \cite[\S 3(VI), Remark 5.8]{LakshmibaiG/PVII87}: Let $\theta\in W_G$, say, $\theta=(a_1,\cdots a_n)$. 
$$l_G(\theta)=\frac{1}{2}[l_H(\theta)+m(\theta)]\leqno{(1)}$$ 
recall (cf. \eqref{equation:mw}) that $m(\theta)=\#\{i,1\le i\le m|a_i>m\}$. 
$$dim\ T_{id}(\theta,G)=\frac{1}{2}[dim\ T_{id}(\theta,H)+c(\theta)] \leqno{(2)}$$
 where $c(\theta)=\#\{1\le i\le m\,|\,\theta\ge s_{\epsilon_{2i}}\}$, and $T_{id}(\theta,G)$(resp $T_{id}(\theta,H)$) denotes the Zariski tangent space of $X_{B_G}(\theta)$(resp $X_{B_H}(\theta)$) at $e_{id}$. Note that $s_{\epsilon_{2i}}$ is just the transposition $(i,i')$(cf. ~\ref{equation:simpleReflectionsSP2n}). Now taking $\theta=w_{max}$, 
we have, $c(w_{max})=m(w_{max})$. Hence we obtain from (1), (2) that dim$\, T_{id}(w_{max},G)=l_G(w_{max})$, proving that $X_{B_{G}}(w_{max})$ is smooth at $e_{id}$, and hence is nonsingular (note that for a Schubert variety X, the singular locus of $X$, $Sing(X)$, is $B$-stable implying $e_{id}\in Sing(X)$ if $Sing(X)\neq \emptyset$). Thus the claim  (and hence the required result) follows.
\end{proof}
\begin{remark}
We have that $X_{\tilde{P}}(\tilde{w})$  is the fixed
point set under an automorphism of  order two of the Schubert variety $X_{\tilde{Q}}(\tilde{w})$
and thus is smooth, provided char$\,K\ne 2$.\cite[Proposition 3.4]{EdixNeron92}
\end{remark}

\begin{discussionbox}
\label{discussionbox:OMinusPlucker}
To give a characterization of $Y_{\tilde{Q}}(\tilde{w})$ we first need a review of the structure of $O^{-}_{H/\tilde{Q}}$ and its Pl\"ucker coordinates.

Recall that for the Pl\"ucker embedding of the Grassmannian $\Grass_{d,n}$, the \emph{Pl\"ucker coordinate} $p_{\underline{i}}(U)$, $U\in \Grass_{d,n}$ and $\underline{i}=(i_1,\ldots,i_d)$ with $1\leq i_1 < \ldots < i_d < n$, is just the $d\times d$ minor of the matrix $A_{n\times d}$ with row indices $(i_1,\ldots,i_d)$ (here the matrix $A_{n\times d}$ represents the $d$-dimensional subspace $U$ with respect to the standard basis).

The space $O^{-}_{H/\tilde{Q}}$ can be identified with the affine space of lower-triangular matrices with possible
non-zero entries $x_{ij}$ at row $i$ and column $j$ where $(i,j)$ is such that there exists a  $l \in \{r-k, n, 2n-(r-k)\}$ such that $j \leq l < i \leq N$. To see this, note that we are interested in those $(i,j)$ such that the root $\epsilon_i - \epsilon_j$ belongs to  $R^{-} \setminus R_{\tilde{Q}}^{-}$. Since $R_{\tilde{Q}}^{-} = R_{Q_{\widehat{r-k}}}^{-} \bigcap R_{Q_{\widehat{n}}} \bigcap R_{Q_{\widehat{2n-(r-k)}}}$, we see that we are looking for $(i,j)$ such that $\epsilon_i - \epsilon_j \in R^{-} \setminus R_{Q_{\widehat{l}}}^{-}$, for some $l \in \{r-k, n, 2n-(r-k)\}$. For the maximal parabolic subgroup $P_{\widehat{l}}\,$, we have, $R^{-} \setminus R_{Q_{\widehat{l}}}^{-} = \{\epsilon_i-\epsilon_j \mid 1\le j\le l<i\le N\}$. We have $\dim O^{-}_{H/\tilde{Q}} = |R^{-}\setminus R^{-}_{\tilde{Q}}|$.

Thus we have the following identification 
\begin{equation}
\label{equation:StructureOMinusTQ}
O^{-}_{H/\tilde{Q}} = \left[\begin{array}{cccc}
\Id_{r-k} & 0 & 0 & 0 \\
A' & \Id_{n-(r-k)} & 0 & 0 \\
\mathcal{D}_{1} & \mathcal{D}_{2} & \Id_{n-(r-k)} & 0 \\
\mathcal{D}_{3} & \mathcal{D}_{4} & E' & \Id_{r-k} \\
\end{array}\right]
\end{equation}
\\ \noindent where the block matrices have possible non-zero entries $x_{ij}$ given by \\
\makebox[0.47\textwidth][l]{$A'=\left[\begin{array}{ccc} x_{(r-k)+1\;\;1}&\ldots&x_{(r-k)+1\;\;r-k} \\
\vdots&\;&\vdots \\
x_{n\;\;1}&\ldots&x_{n\;\;r-k} \\
\end{array}\right]$}
\makebox[0.5\textwidth][l]{$E'=\left[\begin{array}{ccc} x_{2n-(r-k)+1\;\;n+1} & \ldots & x_{2n-(r-k)+1\;\;2n-(r-k)} \\
\vdotswithin{\ldots} & \; & \vdotswithin{\ldots} \\
x_{2n\;\;n+1} & \ldots & x_{2n\;\;2n-(r-k)} \\
\end{array}\right]$} \par \noindent
\makebox[0.47\textwidth][l]{$\mathcal{D}_{1}=\left[\begin{array}{ccc}
x_{n+1\;\;1} & \ldots & x_{n+1\;\;r-k} \\
\vdotswithin{\ldots} & \; & \vdotswithin{\ldots} \\
x_{2n-(r-k)\;\;1} & \ldots & x_{2n-(r-k)\;\;r-k} \\
\end{array}\right]$}
\makebox[0.5\textwidth][l]{$\mathcal{D}_{2}=\left[\begin{array}{ccc}
x_{n+1\;\;(r-k)+1} & \ldots & x_{n+1\;\;n} \\
\vdotswithin{\ldots} & \; & \vdotswithin{\ldots} \\
x_{2n-(r-k)\;\;(r-k)+1} & \ldots & x_{2n-(r-k)\;\;n} \\
\end{array}\right]$} \par \noindent
\makebox[0.47\textwidth][l]{$\mathcal{D}_{3}=\left[\begin{array}{ccc}
x_{2n-(r-k)+1\;\;1} & \ldots & x_{2n-(r-k)+1\;\;r-k} \\
\vdotswithin{\ldots} & \; & \vdotswithin{\ldots} \\
x_{2n\;\;1} & \ldots & x_{2n\;\;r-k} \\
\end{array}\right]$}
\makebox[0.5\textwidth][l]{$\mathcal{D}_{4}=\left[\begin{array}{ccc}
x_{2n-(r-k)+1\;\;(r-k)+1} & \ldots & x_{2n-(r-k)+1\;\;n} \\
\vdotswithin{\ldots} & \; & \vdotswithin{\ldots} \\
x_{2n\;\;(r-k)+1} & \ldots & x_{2n\;\;n} \\
\end{array}\right]$} \par \noindent

We may break the Pl\"ucker coordinates we want to understand into several cases.

\textbf{Case 1:} For $i>r$, $j\leq r-k$ the Pl\"ucker coordinate $p_{(i,j)}^{(r-k)}$ on the Grassmannian $H/Q_{\widehat{r-k}}$ lifts to a regular function on $H/\tilde{Q}$. Its restriction to $O_{H/\tilde{Q}}^{-}$ is the $r-k\times r-k$ minor of ~\eqref{equation:StructureOMinusTQ} with column indices $\{1,2,\ldots,r-k\}$ and row indices $\{1,\ldots,j-1,j+1,\ldots,r-k,i\}$. This minor is the determinant of a $r-k\times r-k$ matrix with the top $(r-k)-1$ rows equal to $\Id_{r-k}$ omitting the $j$th row, and the bottom row equal to the first $r-k$ entries of the $i$th row of ~\eqref{equation:StructureOMinusTQ}. The determinant of this matrix is thus $(-1)^{(r-k)-j}x_{ij}$. Thus for $i>r$, $j\leq r-k$ \begin{equation}
\label{equation:PluckerRMK}
p_{(i,j)}^{(r-k)}\big{|}_{O_{H/\tilde{Q}}^{-}}=(-1)^{(r-k)-j}x_{ij}
\end{equation} 

\textbf{Case 2:} For $i>2n-(r-k)$, $n<j\leq 2n-(r-k)$ the Pl\"ucker coordinate $p_{(i,j)}^{(2n-(r-k))}$ on the Grassmannian $H/Q_{\widehat{2n-(r-k)}}$ lifts to a regular function on $H/\tilde{Q}$. Its restriction to $O_{H/\tilde{Q}}^{-}$ is the $2n-(r-k)\times 2n-(r-k)$ minor of ~\eqref{equation:StructureOMinusTQ} with column indices $\{1,2,\ldots,2n-(r-k)\}$ and row indices $\{1,\ldots,j-1,j+1,\ldots,2n-(r-k),i\}$. This minor is the determinant of \begin{equation}
\label{equation:Plucker2nRMKMatrix}
\left[\begin{array}{ccc}
\Id_{r-k} & 0 & 0\\
A' & \Id_{n-(r-k)} & 0 \\
\hat{\mathcal{D}_{1}} & \hat{\mathcal{D}_{2}} & \hat{I_1}\\
\left[x_{i\;1}\ldots x_{i\;r-k}\right] & \left[x_{i\;(r-k)+1}\ldots x_{i\;n}\right] & \left[x_{i\;n+1}\ldots x_{i\;2n-(r-k)}\right]
\end{array}\right]
\end{equation} where $\hat{\mathcal{D}_{1}}, \hat{\mathcal{D}_{2}},$ and $\hat{I_1}$ are equal to, respectively,  $\mathcal{D}_{1}, \mathcal{D}_{2},$ and $\Id_{n-(r-k)}$ with their $(j-n)$th rows omitted. The determinant of ~\eqref{equation:Plucker2nRMKMatrix} is equal to the determinant of 
\begin{center}
$\left[\begin{array}{c}
\hat{I_1} \\
\left[x_{i\;n+1}\ldots x_{i\;2n-(r-k)}\right] \\
\end{array}\right]$
\end{center}
As above this is just an identity matrix with a single row replaced and so its determinant is just equal $(-1)^{2n-(r-k)-j}x_{ij}$. Thus for $i>2n-(r-k)$, $n<j\leq 2n-(r-k)$\begin{equation}
\label{equation:Plucker2nRMK}
p_{(i,j)}^{(2n-(r-k))}\big{|}_{O_{H/\tilde{Q}}^{-}}=(-1)^{2n-(r-k)-j}x_{ij}
\end{equation} 

\textbf{Case 3:} For $i>2n-(r-k)$, $r-k<j\leq n$ the Pl\"ucker coordinate $p_{(i,j)}^{(2n-(r-k))}$ on the Grassmannian $H/Q_{\widehat{2n-(r-k)}}$ lifts to a regular function on $H/\tilde{Q}$. Its restriction to $O_{H/\tilde{Q}}^{-}$ is the $2n-(r-k)\times 2n-(r-k)$ minor of ~\eqref{equation:StructureOMinusTQ} with column indices $\{1,2,\ldots,2n-(r-k)\}$ and row indices $\{1,\ldots,j-1,j+1,\ldots,2n-(r-k),i\}$. This minor is the determinant of\begin{equation}
\label{equation:Plucker2nRMKMatrix+}
\left[\begin{array}{ccc}
\Id_{r-k} & 0 & 0\\
\hat{A'} & \hat{I_2} & 0 \\
\mathcal{D}_{1} & \mathcal{D}_{2} & \Id_{n-(r-k)}\\
\left[x_{i\;1}\ldots x_{i\;r-k}\right] & \left[x_{i\;(r-k)+1}\ldots x_{i\;n}\right] & \left[x_{i\;n+1}\ldots x_{i\;2n-(r-k)}\right]
\end{array}\right]
\end{equation} where $\hat{A'}$ and $\hat{I_2}$ are equal to, respectively,  $A'$ and $\Id_{n-(r-k)}$ with their $j-(r-k)$th rows omitted. The determinant of ~\eqref{equation:Plucker2nRMKMatrix+} is equal to the determinant of \begin{equation}
\label{equation:Plucker2nRMKMatrix++}
\left[\begin{array}{cc}
\hat{I_2} & 0 \\
\mathcal{D}_{2} & \Id_{n-(r-k)}\\
\left[x_{i\;(r-k)+1}\ldots x_{i\;n}\right] & \left[x_{i\;n+1}\ldots x_{i\;2n-(r-k)}\right]
\end{array}\right]
\end{equation} To calculate this shift the bottom row so that it becomes the $j-(r-k)$th row of $\hat{I_2}$. Let $M=2n-(r-k)-j$. Then the determinant of ~\eqref{equation:Plucker2nRMKMatrix++} will be $(-1)^{M}$ times the determinant of\begin{equation}
\label{equation:Plucker2nRMKMatrix+++}
\left[\begin{array}{cc}
I_3 & Z \\
\mathcal{D}_{2} & \Id_{n-(r-k)}\\
\end{array}\right]
\end{equation} where $I_3$ is $\Id_{n-(r-k)}$ with the $j-(r-k)$th row replaced by $\left[x_{i\;(r-k)+1}\ldots x_{i\;n}\right]$ and $Z$ is the zero matrix with the $j-(r-k)$th row replaced by $\left[x_{i\;n+1}\ldots x_{i\;2n-(r-k)}\right]$. Since the lower right block matrix of ~\eqref{equation:Plucker2nRMKMatrix+++} commutes with its lower left block matrix we have that the determinant of ~\eqref{equation:Plucker2nRMKMatrix+++} is equal to the determinant of $I_3-Z\mathcal{D}_{2}$.
We have that $Z\mathcal{D}_{2}$ is equal to the zero matrix with its $j-(r-k)$th row replaced by 
\begin{center}
$\left[x_{i\;(r-k)+1}\ldots x_{i\;n}\right]\mathcal{D}_{2}$
\end{center}
And thus $I_3-Z\mathcal{D}_{2}$ is equal to $\Id_{n-(r-k)}$ with the $j-(r-k)$th row replaced by 
\begin{center}
$\left[x_{i\;(r-k)+1}\ldots x_{i\;n}\right] - \left[x_{i\;(r-k)+1}\ldots x_{i\;n}\right]\mathcal{D}_{2}$
\end{center}
And so the determinant of $I_3-Z\mathcal{D}_{2}$ is merely equal to the $j-(r-k)$th entry of $I_3-Z\mathcal{D}_{2}$ which is
\begin{center}
$x_{ij} - \left[x_{i\;(r-k)+1}\ldots x_{i\;n}\right]\left[x_{n+1\;j}\ldots x_{2n-(r-k)\;j}\right]^{T}$
\end{center}
Combining all our steps, we finally have that for $i>2n-(r-k)$, $r-k<j\leq n$ 
\begin{equation}
\label{equation:Plucker2nRMK+}
p_{(i,j)}^{(2n-(r-k))}\big{|}_{O_{H/\tilde{Q}}^{-}}=(-1)^{M}(x_{ij} - \left[x_{i\;(r-k)+1}\ldots x_{i\;n}\right]\left[x_{n+1\;j}\ldots x_{2n-(r-k)\;j}\right]^{T})
\end{equation}
\vspace{-10pt}\end{discussionbox}

\begin{theorem} 
\label{theorem:schubertVarietySL2nMatrixForm}
The opposite cell $Y_{\tilde{Q}}(\tilde{w})$ can be identified with the subspace of $O_{H/\tilde{Q}}^{-}$ given by matrices of the form
\begin{center}
$\left[\begin{array}{cccc}
\Id_{r-k} & 0 & 0 & 0 \\
A' & \Id_{n-(r-k)} & 0 & 0 \\
0 & \mathcal{D}_{2} & \Id_{n-(r-k)} & 0 \\
0 & E'\mathcal{D}_{2} & E' & \Id_{r-k} \\
\end{array}\right]$
\end{center}
with $\mathcal{D}_{2} \in  \Mat_{n-(r-k)}$, $A'\in \Mat_{n-(r-k)\times r-k}$ with the bottom $n-r$ rows of $A'$ all zero, and $E'\in \Mat_{r-k\times n-(r-k)}$ with the left $n-r$ columns of $E'$ all zero.

\end{theorem}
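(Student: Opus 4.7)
The plan is to identify $Y_{\tilde Q}(\tilde w)$ inside the affine space $O^-_{H/\tilde Q}$ of \eqref{equation:StructureOMinusTQ} by pulling back the vanishing Plücker relations that cut out $X_{\tilde Q}(\tilde w)$. Since $\tilde Q$ is the three-step parabolic $Q_{\widehat{r-k},\widehat{n},\widehat{2n-(r-k)}}$, I would use the three Plücker embeddings coming from the projections $H/\tilde Q \to H/Q_{\widehat m}$ for $m\in\{r-k,\,n,\,2n-(r-k)\}$, combined with the explicit expressions for these Plücker coordinates on $O^-_{H/\tilde Q}$ developed in Discussion~\ref{discussionbox:OMinusPlucker}.

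First I would extract from the explicit form of $w_{\max}$ used in the proof of Proposition~\ref{proposition:schubertVarietySL2nSmooth} the projections of $\tilde w$ to each of the three Grassmannians. Sorted, these representatives are $(k+1,\ldots,r)$ in $W^{Q_{\widehat{r-k}}}$ and $(1,\ldots,2n-r,\,2n-k+1,\ldots,2n)$ in $W^{Q_{\widehat{2n-(r-k)}}}$. A Plücker coordinate $p_I$ on a Grassmannian vanishes on the corresponding Schubert variety precisely when some entry of the sorted multi-index $I$ strictly exceeds the corresponding entry of the defining multi-index.

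From $H/Q_{\widehat{r-k}}$: for every $i>r$ and $j\le r-k$ the extremal Plücker coordinate $p^{(r-k)}_{(i,j)}$ vanishes on $X_{\tilde Q}(\tilde w)$, and by \eqref{equation:PluckerRMK} this translates on $O^-_{H/\tilde Q}$ to $x_{ij}=0$. These equations say precisely that $\mathcal{D}_1=0$, $\mathcal{D}_3=0$, and the bottom $n-r$ rows of $A'$ vanish. From $H/Q_{\widehat{2n-(r-k)}}$: a direct comparison of the sorted multi-index $(1,\ldots,\widehat j,\ldots,2n-(r-k),i)$ with $(1,\ldots,2n-r,\,2n-k+1,\ldots,2n)$ shows that $p^{(2n-(r-k))}_{(i,j)}$ vanishes on $X_{\tilde Q}(\tilde w)$ for all $i>2n-(r-k)$ and $j\le 2n-r$. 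Combined with \eqref{equation:Plucker2nRMK} this gives $x_{ij}=0$ for $n<j\le 2n-r$ and $i>2n-(r-k)$, i.e.\ the left $n-r$ columns of $E'$ vanish. Combined with \eqref{equation:Plucker2nRMK+}, for $r-k<j\le n$ and $i>2n-(r-k)$ it produces the quadratic relation whose block form reads exactly $\mathcal{D}_4=E'\mathcal{D}_2$ (here one has to be careful to identify the correct row of the parameter matrix in the derivation of \eqref{equation:Plucker2nRMK+}, so that the row appearing in the dot product is the $E'$-row rather than the $\mathcal{D}_4$-row).

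To finish I would check that the projection to the middle Grassmannian $H/Q_{\widehat n}$ imposes no further equations, and then count dimensions. The subspace described in the theorem is parametrized freely by the top $k$ rows of $A'$, the entire block $\mathcal{D}_2$, and the right $k$ columns of $E'$, so it is an affine space of dimension $2k(r-k)+(n-(r-k))^2$. I would match this against $\dim X_{\tilde Q}(\tilde w) = l(\tilde w)$ by a direct inversion count on $w_{\max}$ modulo $W_{\tilde Q}$. Since $X_{\tilde Q}(\tilde w)$ is irreducible and smooth by Proposition~\ref{proposition:schubertVarietySL2nSmooth} and is contained in the described affine subspace via the equations above, equality of dimensions forces $Y_{\tilde Q}(\tilde w)$ to coincide with the claimed subspace. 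The main obstacle is the completeness of the relations: one has to be sure no additional Plücker coordinate from the three Grassmannian projections gives a new equation on $O^-_{H/\tilde Q}$, which is what forces the dimension comparison at the end; the quadratic relation $\mathcal{D}_4=E'\mathcal{D}_2$ is also subtler than the purely linear vanishings since it arises from unwinding the minor formula \eqref{equation:Plucker2nRMK+} rather than from a bare coordinate vanishing.
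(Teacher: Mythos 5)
Your proposal follows essentially the same route as the paper: pull back the extremal Plücker vanishings from the three Grassmannian projections $H/\tilde Q \to H/Q_{\widehat{m}}$, use the explicit minor formulas of Discussion~\ref{discussionbox:OMinusPlucker} to read them off as equations on $O^-_{H/\tilde Q}$, derive the block-form constraints (including the quadratic $\mathcal{D}_4 = E'\mathcal{D}_2$), and then use smoothness of $X_{\tilde Q}(\tilde w)$ to certify that no further equations are needed. The only stylistic difference is the final completeness step: you match $\dim X_{\tilde Q}(\tilde w) = l(\tilde w)$ against an explicit count of free parameters $2k(r-k)+(n-(r-k))^2$, whereas the paper invokes Proposition~\ref{proposition:typeATangent} to equate the codimension with the number of reflections $s_\alpha$ not below $\tilde w$, which the paper observes are exactly the indices $(i,j)$ producing the equations (avoiding the explicit length computation); the two are equivalent under smoothness. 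You also correctly flagged that in unwinding \eqref{equation:Plucker2nRMK+} the row entering the dot product must be the $E'$-row, i.e. $[x_{i,n+1}\ldots x_{i,2n-(r-k)}]$, rather than the $\mathcal{D}_4$-row $[x_{i,(r-k)+1}\ldots x_{i,n}]$ as written there, which is indeed a typo; the paper's surrounding text and the theorem statement make clear that the intended relation is $\mathcal{D}_4 = E'\mathcal{D}_2$.
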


\begin{proof}
For $j\leq r-k<i$ the reflection $(i,j)$
equals $(1,2,..,j-1,j+1,..,r-k,i)$ and $\tilde{w}$ equals $(k+1,...,r)$ in $W/W_{Q_{\widehat{r-k}}}$. Thus for $i>r$, and $j\leq r-k$
we have the reflection $(i,j)$ is not smaller than $\tilde{w}$ in
$W/W_{Q_{\widehat{r-k}}}$ so the Pl\"ucker coordinate $p_{(i,j)}^{(r-k)}$
vanishes on $X_{\tilde{Q}}(\tilde{w})$. We saw in ~\eqref{equation:PluckerRMK} that for such $(i,j)$ we have $p_{(i,j)}^{(r-k)}=(-1)^{(r-k)-j}x_{ij}$ and thus $x_{ij}\equiv0$
on $Y_{\tilde{Q}}(\tilde{w})$.

For $j\leq n<i$ the reflection $(i,j)$ equals $(1,2,..,j-1,j+1,..,n,i)$
and $\tilde{w}$ is equal to $(k+1,...,r,n^{\prime},...,(r+1)^{\prime},k^{\prime},...,1^{\prime})$ in $W/W_{Q_{\hat{n}}}$.
Thus there is no choice of $(i,j)$ such that $(i,j)$ is not smaller
than $\tilde{w}$ in $W/W_{Q_{\widehat{n}}}$.

For $j\leq 2n-(r-k)<i$ the reflection $(i,j)$ equals $(1,2,..,j-1,j+1,..,2n-(r-k),i)$
and $\tilde{w}$ equals $(1,...,n,n^{\prime},...,(r+1)^{\prime},k^{\prime},...,1^{\prime})$ in $W/W_{Q_{\hat{2n-(r-k)}}}$ .
Thus for $i>2n-(r-k)$, and $j\leq 2n-r$
we have the reflection $(i,j)$ is not smaller than $\tilde{w}$ in
$W/W_{Q_{\widehat{2n-(r-k)}}}$. We break these into two cases, ignoring those $j\leq r-k$ as we have already shown above that for $j\leq r-k$ and $i>2n-(r-k)$ we have $x_{ij}\equiv0$ on $Y_{\tilde{Q}}(\tilde{w})$.

The first case is for $(i,j)$ with $i>2n-(r-k)$, and $n<j\leq 2n-r$. The fact that $(i,j)$ is not smaller than $\tilde{w}$ in
$W/W_{Q_{\widehat{2n-(r-k)}}}$ implies that the Pl\"ucker coordinate $p_{(i,j)}^{(2n-(r-k))}$ vanishes on $X_{\tilde{Q}}(\tilde{w})$. We saw in ~\eqref{equation:Plucker2nRMK} that for such $(i,j)$ we have $p_{(i,j)}^{(2n-(r-k))}=(-1)^{2n-(r-k)-j}x_{ij}$ and thus $x_{ij}\equiv0$
on $Y_{\tilde{Q}}(\tilde{w})$.

The second case is for $(i,j)$ with $i>2n-(r-k)$ and $r-k<j\leq n$. $(i,j)$ is not smaller than $\tilde{w}$ in
$W/W_{Q_{\widehat{2n-(r-k)}}}$ implies that the Pl\"ucker coordinate $p_{(i,j)}^{(2n-(r-k))}$ vanishes on $X_{\tilde{Q}}(\tilde{w})$. We saw in ~\eqref{equation:Plucker2nRMK+} that for such $(i,j)$ we have $p_{(i,j)}^{(2n-(r-k))}=(-1)^{M}(x_{ij}-\left[x_{i\;(r-k)+1}\ldots x_{i\;n}\right]\left[x_{n+1\;j}\ldots x_{2n-(r-k)\;j}\right]^{T})$. Combining these two facts we get $x_{ij}=\left[x_{i\;(r-k)+1}\ldots x_{i\;n}\right]\left[x_{n+1\;j}\ldots x_{2n-(r-k)\;j}\right]^{T}$. 

Noting that $\left[x_{i\;(r-k)+1}\ldots x_{i\;n}\right]$ is the $(2n-(r-k)-i)$th row of $E'$ and $\left[x_{n+1\;j}\ldots x_{2n-(r-k)\;j}\right]^{T}$ is the $(2n-(r-k)-j)$th column of $\mathcal{D}_{2}$ it is clear that $x_{ij}=(E'X)_{(2n-(r-k)-i)\;(2n-(r-k)-j)}$ on $Y_{\tilde{Q}}(\tilde{w})$.

But the reflections $(i,j)$ with $i>r$ and $j\leq r-k$, and $i>2n-(r-k)$ and $r-k<j\leq 2n-r$ are exactly the reflections $s_{\alpha}$ with $\alpha\in R^{-}\backslash R_{\tilde{Q}}^{-}$ and $\tilde{w}\ngeq s_{\alpha}$ in $W/W_{\tilde{Q}}$. Since $X_{\tilde{Q}}(\tilde{w})$ is smooth this implies by Proposition \ref{proposition:typeATangentSpace} that the codimension of $Y_{\tilde{Q}}(\tilde{w})$ in $O_{H/\tilde{Q}}^{-}$ equals $\#\{(i,j)|\, i>r\,\, and\,\, j\leq r-k,\,\,or\,\,i>2n-(r-k)\,\,and\,\,r-k<j\leq 2n-r\}$. Above we have shown that for each such $(i,j)$, $x_{ij}$ either vanishes, or is completely dependent on the entries of $E'X$. Thus $Y_{\tilde{Q}}(\tilde{w})$ is the subspace of $O_{H/\tilde{Q}}^{-}$ defined by the vanishing of $\{x_{ij}|\, i>r\,\, and\,\, j\leq r-k\,\,,or\,\,i>2n-(r-k)\,\,and\,\,n<j\leq 2n-r\}$ and $x_{ij}=(E'X)_{(2n-(r-k)-i)\;(2n-(r-k)-j)}$ for $i>2n-(r-k)$ and $r-k<j\leq n$.
\end{proof}

\begin{example}
Let $k=2$,$r=4$, and $n=5$. Then $\tilde{Q}=Q_{\hat{2},\hat{5},\hat{8}}$, $w=(3,4,6,9,10)$, and $\tilde{w}=(3,4,6,9,10,1,2,5)$. Then
\begin{center}
$O_{H/\tilde{Q}}^{-}=\left[\begin{array}{cccccccccc}
1 & 0 & 0 & 0 & 0 & 0 & 0 & 0 & 0 & 0\\
0 & 1 & 0 & 0 & 0 & 0 & 0 & 0 & 0 & 0\\
x_{31} & x_{32} & 1 & 0 & 0 & 0 & 0 & 0 & 0 & 0\\
x_{41} & x_{42} & 0 & 1 & 0 & 0 & 0 & 0 & 0 & 0\\
x_{51} & x_{52} & 0 & 0 & 1 & 0 & 0 & 0 & 0 & 0\\
x_{61} & x_{62} & x_{63} & x_{64} & x_{65} & 1 & 0 & 0 & 0 & 0\\
x_{71} & x_{72} & x_{73} & x_{74} & x_{75} & 0 & 1 & 0 & 0 & 0\\
x_{81} & x_{82} & x_{83} & x_{84} & x_{85} & 0 & 0 & 1 & 0 & 0\\
x_{91} & x_{92} & x_{93} & x_{94} & x_{95} & x_{96} & x_{97} & x_{98} & 1 & 0\\
x_{101} & x_{102} & x_{103} & x_{104} & x_{105} & x_{106} & x_{107} & x_{108} & 0 & 1\\
\end{array}\right]$
\end{center}
And $Y_{\tilde{P}}(\tilde{w})$ will be the subspace of $O_{H/\tilde{Q}}^{-}$ given by
\begin{center}
$\left[\begin{array}{cccccccccc}
1 & 0 & 0 & 0 & 0 & 0 & 0 & 0 & 0 & 0\\
0 & 1 & 0 & 0 & 0 & 0 & 0 & 0 & 0 & 0\\
x_{31} & x_{32} & 1 & 0 & 0 & 0 & 0 & 0 & 0 & 0\\
x_{41} & x_{42} & 0 & 1 & 0 & 0 & 0 & 0 & 0 & 0\\
0 & 0 & 0 & 0 & 1 & 0 & 0 & 0 & 0 & 0\\
0 & 0 & x_{63} & x_{64} & x_{65} & 1 & 0 & 0 & 0 & 0\\
0 & 0 & x_{73} & x_{74} & x_{75} & 0 & 1 & 0 & 0 & 0\\
0 & 0 & x_{83} & x_{84} & x_{85} & 0 & 0 & 1 & 0 & 0\\
0 & 0 & x_{97}x_{73}+x_{98}x_{83} & x_{97}x_{74}+x_{98}x_{84} & x_{97}x_{75}+x_{98}x_{85} & 0 & x_{97} & x_{98} & 1 & 0\\
0 & 0 & x_{107}x_{73}+x_{108}x_{83} & x_{107}x_{74}+x_{108}x_{84} & x_{107}x_{75}+x_{108} & 0 & x_{107} & x_{108} & 0 & 1\\
\end{array}\right]$
\end{center}
\end{example}

\begin{corollary} 
\label{corollary:schubertVarietySP2nMatrixForm}
The opposite cell $Y_{\tilde{P}}(\tilde{w})$ can be identified with the subspace of $O_{G/\tilde{P}}^{-}$ given by matrices of the form
\begin{center}
$\left[\begin{array}{cccc}
\Id_{r-k} & 0 & 0 & 0 \\
A' & \Id_{n-(r-k)} & 0 & 0 \\
0 & \mathcal{D}_{2} & \Id_{n-(r-k)} & 0 \\
0 & -\JDiag[r-k](A')^{T}\JDiag[r-k]\mathcal{D}_{2} & -\JDiag[r-k](A')^{T}\JDiag[r-k] & \Id_{r-k} \\
\end{array}\right]$
\end{center}
with $\JDiag[n-(r-k)]\mathcal{D}_{2} \in  \Sym_{n-(r-k)}$ and $A'\in \Mat_{n-(r-k)\times r-k}$ with the bottom $n-r$ rows of $A'$ all zero.

\end{corollary}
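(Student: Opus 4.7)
The plan is to realize $Y_{\tilde P}(\tilde w)$ as $Y_{\tilde Q}(\tilde w)\cap G$ inside $U^{-}_{\tilde Q}\cong O^{-}_{H/\tilde Q}$, and then to read off the conditions for a matrix in the form of Theorem~\ref{theorem:schubertVarietySL2nMatrixForm} to lie in $G$ by imposing the symplectic relations~\eqref{equation:StructureSP2nAD}--\eqref{equation:StructureSP2nADCE}.

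First I would establish $Y_{\tilde P}(\tilde w)=Y_{\tilde Q}(\tilde w)\cap G$. The parabolic analog of Proposition~\ref{proposition:inducedSchemeTheSchubert}, obtained by projecting the Borel-level statement along $G/B_G\hookrightarrow H/B_H$, gives $X_{\tilde P}(\tilde w)=X_{\tilde Q}(\tilde w)\cap G/\tilde P$ scheme-theoretically. The $\sigma$-stability of $\tilde Q$ shows that $\sigma$ descends to $H/\tilde Q$ with $G/\tilde P$ as its fixed locus; restricting to opposite cells yields $U^{-}_{\tilde P}=(U^{-}_{\tilde Q})^{\sigma}=U^{-}_{\tilde Q}\cap G$, so $O^{-}_{G/\tilde P}=O^{-}_{H/\tilde Q}\cap G/\tilde P$ (an analog of Remark~\ref{remark:identificationOMinusGP}), and the claim follows.

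Next, taking $Z$ in the form of Theorem~\ref{theorem:schubertVarietySL2nMatrixForm} and writing it as a $2\times 2$ matrix of $n\times n$ blocks $\begin{bmatrix}A & 0\\ D & E\end{bmatrix}$, I would check the three symplectic relations. Relation~\eqref{equation:StructureSP2nCE} is vacuous since $C=0$. Relation~\eqref{equation:StructureSP2nADCE} reduces to $E=\JDiag[n](A^{T})^{-1}\JDiag[n]$; a direct block computation---taking care that conjugation by $\JDiag[n]$ swaps the $(r-k,n-(r-k))$ row/column splitting for the reversed $(n-(r-k),r-k)$ splitting---forces $E'=-\JDiag[r-k](A')^{T}\JDiag[n-(r-k)]$. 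Under this relation, the vanishing of the bottom $n-r$ rows of $A'$ automatically translates into the vanishing of the left $n-r$ columns of $E'$, matching the constraint in Theorem~\ref{theorem:schubertVarietySL2nMatrixForm}. Finally, relation~\eqref{equation:StructureSP2nAD}---the symmetry of $A^{T}\JDiag[n] D$---splits into an off-diagonal condition $\bigl((A')^{T}\JDiag[n-(r-k)]+\JDiag[r-k]E'\bigr)\mathcal{D}_{2}=0$, which vanishes identically once the above value of $E'$ is substituted (using $\JDiag[r-k]^{2}=\Id_{r-k}$), together with a diagonal condition that $\JDiag[n-(r-k)]\mathcal{D}_{2}$ is symmetric, i.e., $\mathcal{D}_{2}$ persymmetric. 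Substituting these constraints back into the matrix of Theorem~\ref{theorem:schubertVarietySL2nMatrixForm} recovers exactly the matrix form of the corollary.

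The main technical obstacle is the block-size bookkeeping through $\JDiag[n]$-conjugation: the natural block splitting of $A$ is $(r-k,n-(r-k))$, whereas $\JDiag[n](A^{T})^{-1}\JDiag[n]$ naturally appears with the reversed splitting $(n-(r-k),r-k)$. Once that is tracked carefully, the key cancellation $(A')^{T}\JDiag[n-(r-k)]+\JDiag[r-k]E'=0$ at the prescribed value of $E'$ makes the symmetry condition on $A^{T}\JDiag[n] D$ collapse cleanly to persymmetry of $\mathcal{D}_{2}$, and the characterization of the corollary follows.
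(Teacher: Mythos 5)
Your proposal is correct and matches the paper's approach: both realize $Y_{\tilde P}(\tilde w)$ as the $\sigma$-fixed locus of $Y_{\tilde Q}(\tilde w)$, apply Theorem~\ref{theorem:schubertVarietySL2nMatrixForm}, and then impose \eqref{equation:StructureSP2nAD}--\eqref{equation:StructureSP2nADCE} blockwise, with \eqref{equation:StructureSP2nADCE} fixing $E'=-\JDiag[r-k](A')^{T}\JDiag[n-(r-k)]$ and \eqref{equation:StructureSP2nAD} then reducing to persymmetry of $\mathcal{D}_2$. The only cosmetic difference is that you derive $E'$ from $E=\JDiag[n](A^{T})^{-1}\JDiag[n]$ and then observe the off-diagonal block of $A^T\JDiag[n]D$ cancels, whereas the paper substitutes the forced $E'$ into $D$ first and computes directly; these are the same cancellation, and your explicit tracking of the $(r-k,n-(r-k))$ versus $(n-(r-k),r-k)$ block conventions is if anything slightly more careful than the paper's.
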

\begin{proof}
Let $y \in Y_{\tilde{P}}(\tilde{w})=(Y_{\tilde{Q}}(\tilde{w}))^{\sigma}\subset Y_{\tilde{Q}}(\tilde{w})$. So $y$ is just an element of $Y_{\tilde{Q}}(\tilde{w})$ that is fixed under the involution $\sigma$. That is, an element which satisfies ~\eqref{equation:StructureSP2nAD}-\eqref{equation:StructureSP2nADCE}. Theorem \ref{theorem:schubertVarietySL2nMatrixForm} gives us that $y$ is of the form
\begin{center}
$\left[\begin{array}{cccc}
\Id_{r-k} & 0 & 0 & 0 \\
A' & \Id_{n-(r-k)} & 0 & 0 \\
0 & \mathcal{D}_{2} & \Id_{n-(r-k)} & 0 \\
0 & E'\mathcal{D}_{2} & E' & \Id_{r-k} \\
\end{array}\right]$
\end{center}
with $\mathcal{D}_{2} \in  \Mat_{n-(r-k)}$, $A'\in \Mat_{n-(r-k)\times r-k}$ with the bottom $n-r$ rows of $A'$ all zero, and $E'\in \Mat_{r-k\times n-(r-k)}$ with the left $n-r$ columns of $E'$ all zero.
We must now check what restrictions on $y$ are required for it to satisfy ~\eqref{equation:StructureSP2nAD}-\eqref{equation:StructureSP2nADCE}.
For $y$ to satisfy \eqref{equation:StructureSP2nADCE} we know that 
\begin{center}
$\left[\begin{array}{cc}
\Id_{r-k} & 0 \\
A' & \Id_{n-(r-k)} \\
\end{array}\right]^{T}
\left[\begin{array}{cc}
0 & \JDiag[r-k] \\
\JDiag[n-(r-k)] & 0 \\
\end{array}\right]
\left[\begin{array}{cc}
\Id_{r-k} & 0 \\
E' & \Id_{n-(r-k)} \\
\end{array}\right] \left( =\left[\begin{array}{cc}
(A')^{T}\JDiag[r-k]+\JDiag[r-k]E' & \JDiag[r-k] \\
\JDiag[n-(r-k)] & 0 \\
\end{array}\right] \right)$
\end{center}
must equal
\begin{center}
$\left[\begin{array}{cc}
0 & \JDiag[r-k] \\
\JDiag[n-(r-k)] & 0 \\
\end{array}\right]$
\end{center}
which implies that $E'=-\JDiag[r-k](A')^{T}\JDiag[r-k]$.\\
Any $y$ clearly satisfies ~\eqref{equation:StructureSP2nCE}.\\
And finally for $y$ to satisfy ~\eqref{equation:StructureSP2nAD} we must have
\begin{center}
$\left[\begin{array}{cc}
0 & \mathcal{D}_{2} \\
0 & -\JDiag[r-k](A')^{T}\JDiag[r-k]\mathcal{D}_{2} \\
\end{array}\right]^{T}
\left[\begin{array}{cc}
0 & \JDiag[n-(r-k)] \\
\JDiag[r-k] & 0 \\
\end{array}\right]
\left[\begin{array}{cc}
\Id_{r-k} & 0 \\
A' & \Id_{n-(r-k)} \\
\end{array}\right] \left( =\left[\begin{array}{cc}
0 & 0 \\
0 & \mathcal{D}_{2}^{T}\JDiag[n-(r-k)] \\
\end{array}\right] \right)$
\end{center}
must equal
\begin{center}
$\left[\begin{array}{cc}
\Id_{r-k} & 0 \\
A' & \Id_{n-(r-k)} \\
\end{array}\right]^{T}
\left[\begin{array}{cc}
0 & \JDiag[r-k] \\
\JDiag[n-(r-k)] & 0 \\
\end{array}\right]
\left[\begin{array}{cc}
0 & \mathcal{D}_{2} \\
0 & -\JDiag[r-k](A')^{T}\JDiag[r-k]\mathcal{D}_{2} \\
\end{array}\right] \left( =\left[\begin{array}{cc}
0 & 0 \\
0 & \JDiag[n-(r-k)]\mathcal{D}_{2} \\
\end{array}\right] \right)$
\end{center}
which implies that $\JDiag[n-(r-k)]\mathcal{D}_{2}=\mathcal{D}_{2}^{T}\JDiag[n-(r-k)]$, or equivalently $\JDiag[n-(r-k)]\mathcal{D}_{2}\in \Sym_{n-(r-k)}$.
\end{proof}

\begin{remark}
\label{remark:identificationOMinusPTP}
We may identify the two spaces $O_{P/\tilde{P}}^{-}$ and $O_{\GL_n/P'_{\widehat{r-k}}}^{-}$ under the map 
\begin{center}
$\left[\begin{array}{cc}
\mathcal{A} & 0 \\
0 & \JDiag[n](\mathcal{A}^{T})^{-1}\JDiag[n]
\end{array}\right]\mapsto \mathcal{A}$.
\end{center}
\end{remark}

\begin{remark}
\label{remark:identificationOfYPWasProduct}
Let $V_{w}$ be the linear subspace of $\Sym_n$ given by 
$x_{ij}=0$ if $j\leq r-k$ or $i<n-(r-k)$.
And let $V_{w}^{\prime}$ is the linear subspace of $O_{\GL_n/P'_{\widehat{r-k}}}^{-}$
given by $x_{ij}=0$ if $i>r$ and $j\leq r-k$.

\noindent Consider the map $\delta:Y_{\tilde{P}}(\tilde{w})\hookrightarrow O_{G/\tilde{P}}^{-} = O_{G/P}^{-}\times O_{P/\tilde{P}}^{-} \cong O_{G/P}^{-}\times O_{\GL_n/P'_{\widehat{r-k}}}^{-}$, where the first map is inclusion, the second is simply the product decomposition, and the final map is from Remark \ref{remark:identificationOMinusPTP}. This map is given explicitly by
\begin{center}
$\left[\begin{array}{cccc}
\Id_{r-k} & 0 & 0 & 0 \\
A' & \Id_{n-(r-k)} & 0 & 0 \\
0 & \mathcal{D}_{2} & \Id_{n-(r-k)} & 0 \\
0 & -\JDiag[r-k](A')^{T}\JDiag[r-k]\mathcal{D}_{2} & -\JDiag[r-k](A')^{T}\JDiag[r-k] & \Id_{r-k} \\
\end{array}\right] \mapsto \left(\left[\begin{array}{cccc}
\Id_{r-k} & 0 & 0 & 0 \\
0 & \Id_{n-(r-k)} & 0 & 0 \\
-\mathcal{D}_{2}A' & \mathcal{D}_{2} & \Id_{n-(r-k)} & 0 \\
\JDiag[r-k](A')^{T}\JDiag[r-k]\mathcal{D}_{2}A' & -\JDiag[r-k](A')^{T}\JDiag[r-k]\mathcal{D}_{2} & 0 & \Id_{r-k} \\
\end{array}\right],\left[\begin{array}{cc}
\Id_{r-k} & 0 \\
A' & \Id_{n-(r-k)} \\
\end{array}\right]\right) $
\end{center}
\noindent Consider the isomorphism $\gamma:O_{G/P}^{-}\times O_{\GL_n/P'_{\widehat{r-k}}}^{-}\rightarrow \Sym_n\times O_{\GL_n/P'_{\widehat{r-k}}}^{-}$ (cf. Remark \eqref{remark:identificationOMinusGP}) given by
\begin{center}
$\left(\left[\begin{array}{cc}
\Id_{n} & 0 \\
L & \Id_{n)} \\
\end{array}\right],
\left[\begin{array}{cc}
\Id_{r-k} & 0 \\
N & \Id_{n-(r-k)} \\
\end{array}\right]\right) \mapsto \left((LN)^{T}\JDiag[n]N,\left[\begin{array}{cc}
\Id_{r-k} & 0 \\
N & \Id_{n-(r-k)} \\
\end{array}\right]\right)$
\end{center}
We have that under the map $\gamma \circ \delta$, $Y_{\tilde{P}}(\tilde{w})$ gets identified with $V_w \times V'_w$. This follows by a simple computation and Corollary \ref{corollary:schubertVarietySP2nMatrixForm}. 
\end{remark}

\begin{definition}
\label{definition:ZPW}
Now let $Z_{\tilde{P}}(\tilde{w})\coloneqq Y_{P}(w)\times_{X_{P}(w)}X_{\tilde{P}}(\tilde{w})$.Then $Z_{\tilde{P}}(\tilde{w})=(O_{G/P}^{-}\times P/\tilde{P})\cap X_{\tilde{P}}(\tilde{w})$. Hence $Z_{\tilde{P}}(\tilde{w})$ is smooth, being open in the smooth $X_{\tilde{P}}(\tilde{w})$ (cf. Proposition \ref{proposition:schubertVarietySL2nSmooth}). 
\end{definition}

We define the map $p$ to be the composition of the inclusion map $Z_{\tilde{P}}(\tilde{w})\rightarrow O_{G/P}^{-}\times P/\tilde{P}$ and the projection map $O_{G/P}^{-}\times P/\tilde{P}\rightarrow P/\tilde{P}\;(\cong \GL_n/P'_{\widehat{r-k}})$. Then 
\begin{center}
$p(\left[\begin{array}{cc}
A_{\,} & 0\\
D_{\,} & \JDiag[n](A^{T})^{-1}\JDiag[n]
\end{array}\right](\textrm{mod}\,\tilde{P}))=A(\textrm{mod}\,P'_{\widehat{r-k}})$.
\end{center}
by Proposition \ref{proposition:structureOMinusSp2n}\eqref{proposition:structureOMinusSp2nInverseImage}\eqref{proposition:structureOMinusSp2nSLnModQ}.

We can induce an action of $B_{n} \subset \GL_n$ on $Z_{\tilde{P}}(\tilde{w})$ by the inclusion $B_n \rightarrow B_G$ given by
\begin{center}
$A \mapsto\left[\begin{array}{cc}
A & 0_{n\times n}\\
0_{n\times n} & \JDiag[n](A^{T})^{-1}\JDiag[n]
\end{array}\right]$.
\end{center}
Then the group $B_G$, and hence $B_n$, acts on $Z_{\tilde{P}}(\tilde{w})$ by left multiplication.

\begin{proposition}
\label{proposition:ZBnEquivariantInduced}
The space $Z_{\tilde{P}}(\tilde{w})$ is stable under the action of $B_n$ described above. For the same action, the map $p$ is $B_{n}$-equivariant.
\end{proposition}

\begin{proof}
Let $z\in \SP_{2n}$ such that $z\tilde{P}\in Z_{\tilde{P}}(\tilde{w})$. Then by Proposition \ref{proposition:structureOMinusSp2n}\eqref{proposition:structureOMinusSp2nInverseImage} we may write 
\begin{center}
$z=\left[\begin{array}{cc}
A_{\,} & 0\\
D_{\,} & \JDiag[n](A^{T})^{-1}\JDiag[n]
\end{array}\right](\textrm{mod}\,\tilde{P})$
\end{center}
with $A$ invertible by Proposition \ref{proposition:structureOMinusSp2n}\eqref{proposition:structureOMinusSp2ninvertibleA}. Then for $A'\in B_n$ we have that $A' \cdot z$ equals
\begin{center}
 $\left[\begin{array}{cc}
 A^{\prime} & 0_{n\times n}\\
 0_{n\times n} & \JDiag[n](A^{\prime T})^{-1}\JDiag[n]
 \end{array}\right]z=\left[\begin{array}{cc}
 A^{\prime}A & 0_{\,}\\
 \JDiag[n](A^{\prime T})^{-1}\JDiag[n]D\,\,\,\,\, & \JDiag[n](A^{\prime T})^{-1}(A^{\prime T})^{-1}\JDiag[n]
 \end{array}\right]$
 \end{center}
 Define $z':=A' \cdot z$. Proposition \ref{proposition:structureOMinusSp2n}\eqref{proposition:structureOMinusSp2ninvertibleA} implies that $z'$ is in $O_{G/P}^{-}\times P/\tilde{P}$ if and only if $A'A$ is invertible; clearly this is the case since both $A$ and $A'$ are invertible. The map $X_{B_{G}}(\tilde{w})\rightarrow X_{\tilde{P}}(\tilde{w})$ induced by $G / B_G \rightarrow G / \tilde{P}$ is surjective and thus $z (\textrm{mod}\,B_G) \in X_{B_{G}}(\tilde{w})$. Then $z' (\textrm{mod}\,B_G) \in X_{B_{G}}(\tilde{w})$ since the Schubert variety is a $B_G$-orbit closure, which implies $z' (\textrm{mod}\,\tilde{P}) \in X_{\tilde{P}}(\tilde{w})$. The space $Z_{\tilde{P}}(\tilde{w})=(O_{G/P}^{-}\times P/\tilde{P})\cap X_{\tilde{P}}(\tilde{w})$ and so $z' \in Z_{\tilde{P}}(\tilde{w})$. The $B_n$-equivariance of $p$ follows by $p(A' \cdot z) = A'A = A'p(z)$.
\end{proof}

\begin{theorem}
\label{theorem:subbundleSp2n}
With the notation developed in this section, and $w^{\prime}:=(k+1,..,r,n,..,r+1,k,..,1)\in S_n$, the Weyl group of $\GL_n$,
\begin{enumerate}
\item \label{enum:birationalDesingSp2n}
The natural map $X_{\tilde P}({\tilde{w}}) \rightarrow X_P(w)$ induced by the map $\,G/\tilde{P} \rightarrow G/P$ is proper and birational. This induces a proper and birational map $Z_{\tilde{P}}(\tilde w) \rightarrow Y_P(w)$ that is a desingularization of $Y_P(w)$.

\item \label{enum:birationalFibreSp2n}
The fiber of the desingularization $Z_{\tilde{P}}(\tilde w) \rightarrow Y_P(w)$ at $e_{id}$ is isomorphic to $X_{P'_{\widehat{r-k}}}(w')$.

\item \label{enum:FibrationSp2n}
The image $p(Z_{\tilde{P}}(\tilde w))$ equals $X_{P'_{\widehat{r-k}}}(w^{\prime})$. The map $p$ is a fibration and the fibers are isomorphic to $V_w$.

\item \label{enum:birationalVBRestrictionSp2n}
$Z_{\tilde{P}}(\tilde w)$ is a sub-bundle of the trivial bundle $O_{G/P}^{-} \times X_{P'_{\widehat{r-k}}}(w^{\prime})$. It is the restriction of the homogeneous vector bundle associated to the $P'_{\widehat{r-k}}$-module $V_w$ on $\GL_n/P'_{\widehat{r-k}}$. The module $V_w$ is a $P'_{\widehat{r-k}}$-submodule of $O_{G/P}^{-}$.

\end{enumerate}
\end{theorem}

\begin{proof}
\noindent\eqref{enum:birationalDesingSp2n}: The fact that $\tilde{w}$ is a coset representative of $w\tilde{P}$ in $W^{\tilde{P}}$ implies that the scheme-theoretic image of the composite map $X_{\tilde{P}}(\tilde{w})\hookrightarrow G/\tilde{P} \rightarrow G/P$ is $X_{P}(w)$. This map is clearly proper and the fact that $\tilde{w}$ is a minimal coset representative implies birationality. The above, combined with the definition of $Z_{\tilde{P}}(\tilde w)$, implies that $Z_{\tilde{P}}(\tilde w) \rightarrow Y_P(w)$ is a desingularization of $Y_P(w)$.

\noindent\eqref{enum:birationalFibreSp2n}: We have show in Remark \ref{remark:identificationOfYPWasProduct} that $Y_{\tilde{P}}({\tilde{w}})$ is identified with $V_w\times V_{w}^{\prime}$. Under the map $Y_{\tilde P}({\tilde{w}}) \rightarrow Y_P(w)$ the fiber at $e_{id}$ is $\{0\} \times V_{w}^{\prime}$. Our definition of $V_{w}^{\prime}$ implies that $p(\{0\} \times V_{w}^{\prime})$ corresponds to $Y_{P'_{\widehat{r-k}}}(w^{\prime})$ in $P/\tilde{P} \cong \GL_n/P'_{\widehat{r-k}}$. Thus the closure of $p(\{0\} \times V_{w}^{\prime})$ in $P/\tilde{P}$ is $X_{P'_{\widehat{r-k}}}(w^{\prime})$, which implies that the closure of $\{0\} \times V_{w}^{\prime}$ in $O_{G/P}^{-} \times P/\tilde{P}$ is precisely $\{0\} \times X_{P'_{\widehat{r-k}}}(w^{\prime})$. Our result follows by the fact that $Z_{\tilde P}({\tilde{w}})$ is the closure of $Y_{\tilde P}({\tilde{w}})$ in $O_{G/P}^{-} \times P/\tilde{P}$.

\noindent\eqref{enum:FibrationSp2n}: From Remark \ref{remark:identificationOfYPWasProduct} we have $p(Y_{\tilde P}({\tilde{w}}))=V_{w}^{\prime}\subseteq X_{P'_{\widehat{r-k}}}(w^{\prime})$. Since $Z_{\tilde{P}}({\tilde{w}})$ is the closure of $Y_{\tilde P}({\tilde{w}})$ we have that $Y_{\tilde P}({\tilde{w}})$ is dense in $Z_{\tilde{P}}({\tilde{w}})$. Further, the Schubert variety $X_{P'_{\widehat{r-k}}}(w^{\prime})$ is, by definition, closed in $\GL_n/P'_{\widehat{r-k}}$. Thus $p(Y_{\tilde{P}}({\tilde{w}}))\subseteq X_{P'_{\widehat{r-k}}}(w^{\prime})$ implies $p(Z_{\tilde{P}}({\tilde{w}}))\subseteq X_{P'_{\widehat{r-k}}}(w^{\prime})$. Part $\eqref{enum:birationalFibreSp2n}$ implies $X_{P'_{\widehat{r-k}}}(w^{\prime}) \subseteq p(Z_{\tilde{P}}({\tilde{w}}))$; thus $p(Z_{\tilde P}({\tilde{w}}))= X_{P'_{\widehat{r-k}}}(w^{\prime})$.

We now wish to show that the fibers of $p$ are isomorphic to $V_w$. Remark \ref{remark:identificationOfYPWasProduct} shows that $p^{-1}(e_{id}) \cong V_w$. In Proposition \ref{proposition:ZBnEquivariantInduced} we show that $p$ is $B_n$-equivariant. Further, each $B_n$-orbit of a point in $X_{P'_{\widehat{r-k}}}(w^{\prime})$ has non-trivial intersection with the opposite cell $Y_{P'_{\widehat{r-k}}}(w^{\prime})$, and hence non-trivial intersection with $V_{w}^{\prime}$. These combine to imply that any fiber of $p$ is isomorphic to the fiber at $e_{id}$, $p^{-1}(e_{id}) \cong V_w$.

\noindent\eqref{enum:birationalVBRestrictionSp2n}: Define a right action of $\GL_n$ on $O_{G/P}^{-}$(identified with $\Sym_n$ as in Remark ~\ref{remark:identificationOMinusGP}) as $g\circ v=g^{T}vg$ for $g\in \GL_n,\;v\in \Sym_n$. This induces an action of $P'_{\widehat{r-k}}$ on $O_{G/P}^{-}$ under which $V_w$ is stable. Thus there is an associated homogeneous bundle $\GL_n \times^{P'_{\widehat{r-k}}} V_w \to \GL_n/P'_{\widehat{r-k}}$. We will show that $Z_{\tilde{P}}(\tilde w))$ is isomorphic to the restriction of this bundle to the Schubert subvariety $X_{P'_{\widehat{r-k}}}(w^{\prime})$, thus completing our proof. Consider the following commutative diagram
\begin{center}
\begin{tikzcd}
Z_{\tilde{P}}(\tilde{w})
\arrow[drrr, "\phi"]
\arrow[ddrr, "p", swap]
\arrow[drr, dotted, "\psi" description] & & & \\
& & (\GL_{n}\times^{P'_{\widehat{r-k}}} V_w)|_{X_{P'_{\widehat{r-k}}}(w^{\prime})} \arrow[r] \arrow[d, "\pi'_{V_w}"]
& \GL_{n}\times^{P'_{\widehat{r-k}}} V_w \arrow[d, "\pi_{V_w}"] \\
& & X_{P'_{\widehat{r-k}}}(w^{\prime}) \arrow[r, "\iota"]
& \GL_n/P'_{\widehat{r-k}}
\end{tikzcd}
\end{center}
In this diagram, $\iota$ is the inclusion of the Schubert variety into the Grassmannian $\GL_n/P'_{\widehat{r-k}}$, $\pi_{V_w}$ is the bundle map defined in Section \ref{subsection:Preliminaries:homogVectorBundles}, and $\pi'_{V_w}$ is the restriction of $\pi_{V_w}$ to the subbundle. Define the map $\phi$ by
\begin{center}
$\phi :  {{\begin{bmatrix}
A & 0_{n \times n} \\
D & \JDiag[n](A^{T})^{-1}\JDiag[n]
\end{bmatrix}}(\textrm{mod}\,\tilde P)}
\longmapsto (A,D^T\JDiag[n]A)/\sim.$
\end{center}
Note that this map is well-defined by Remark \ref{remark:identificationOfYPWasProduct} and injective by Proposition \ref{proposition:structureOMinusSp2n}\eqref{proposition:structureOMinusSp2nInverseImage}. It is a trivial check to verify that $\iota \circ p = \pi_{V_w} \circ \phi$. Thus the universal property of products implies that $\psi$ exists, and it is injective. Finally, $Z_{\tilde{P}}(\tilde{w})$ has the same dimension as $(\GL_{n}\times^{P'_{\widehat{r-k}}} V_w)|_{X_{P'_{\widehat{r-k}}}(w^{\prime})}$ and thus $\psi$ is an isomorphism.
\end{proof}

\noindent As an immediate consequence of Theorem \ref{theorem:subbundleSp2n} we have

\begin{corollary}
\label{corollary:geometricTechniqueRealizationSp2n1}
For the open affine subvariety $Y_{P}(w)$ in $O_{G/P}^{-}$, we are able to realize the commutative diagram required for the geomtric technique as
\begin{center}
\begin{tikzcd}
Z_{\tilde P}(\tilde w) \arrow[hookrightarrow]{r} \arrow[d, "q'"]
& O^-_{G/P} \times X_{P'_{\widehat{r-k}}}(w^{\prime}) \arrow[d, "q"] \arrow[r] 
& X_{P'_{\widehat{r-k}}}(w^{\prime})\\
Y_{P}(w) \arrow[hookrightarrow]{r} 
& O^-_{G/P}
\end{tikzcd}
\end{center}
\end{corollary}

Computing the required vector bundle cohomology groups on the Schubert variety $X_{P'_{\widehat{r-k}}}(w^{\prime})$ would be quite difficult. Thankfully, we can simplify the picture somewhat.

\begin{proposition}
\label{proposition:smallerGrass}
$\;$

\begin{enumerate}
\item \label{proposition:smallerGrass:Ident}
The Schubert variety $X_{P'_{\widehat{r-k}}}(w^{\prime})$ is isomorphic to the Grassmannian $\GL_r/P''_{\widehat{r-k}}$, where $P''_{\widehat{r-k}}$ is the parabolic subgroup in $\GL_r$ omitting $\alpha_{r-k}$. 
\item \label{proposition:smallerGrass:VB}
$(\GL_{n}\times^{P'_{\widehat{r-k}}} V_w)|_{X_{P'_{\widehat{r-k}}}(w^{\prime})} \cong (\GL_{n}\times^{P'_{\widehat{r-k}}} V_w)|_{\GL_r/P''_{\widehat{r-k}}} \cong \GL_{r}\times^{P''_{\widehat{r-k}}} V_w$ as homogeneous vector bundles.
\end{enumerate}
\end{proposition}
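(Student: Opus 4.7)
My plan is to handle parts (a) and (b) separately: part (a) reduces to a dimension count for Schubert varieties in Grassmannians, while part (b) follows from the general principle that the restriction of a homogeneous bundle to a sub-flag-variety of the form $H/(H\cap Q)\hookrightarrow G/Q$ is itself an $H$-associated bundle.

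For part (a), I would first identify the minimal representative of $w'$ in $W^{P'_{\widehat{r-k}}}$. Since $\GL_n/P'_{\widehat{r-k}}=\Grass_{r-k,n}$, elements of $W^{P'_{\widehat{r-k}}}$ are encoded by their first $r-k$ entries in increasing order, and the first $r-k$ entries of $w'=(k+1,\ldots,r,n,\ldots,r+1,k,\ldots,1)$ are already $(k+1,k+2,\ldots,r)$ in increasing order. The Schubert condition corresponding to the largest index $r$ forces each point of $X_{P'_{\widehat{r-k}}}(w')$ to be an $(r-k)$-dimensional subspace of $V_r:=\langle e_1,\ldots,e_r\rangle$, yielding a closed immersion $X_{P'_{\widehat{r-k}}}(w')\hookrightarrow\Grass(r-k,V_r)\cong\GL_r/P''_{\widehat{r-k}}$. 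A direct length computation gives $\dim X_{P'_{\widehat{r-k}}}(w')=\sum_{i=1}^{r-k}((k+i)-i)=k(r-k)=\dim\GL_r/P''_{\widehat{r-k}}$, so equality follows from irreducibility.

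For part (b), the first isomorphism is a tautology given part (a). For the second, I would embed $\iota:\GL_r\hookrightarrow\GL_n$ as the top-left $r\times r$ block with $\Id_{n-r}$ in the lower-right. A quick block-matrix check shows $\iota(\GL_r)\cap P'_{\widehat{r-k}}=\iota(P''_{\widehat{r-k}})$, and the induced closed immersion $\GL_r/P''_{\widehat{r-k}}\hookrightarrow\GL_n/P'_{\widehat{r-k}}$ agrees with the identification from part (a). Restricting the $P'_{\widehat{r-k}}$-action on $V_w$ (which exists by Theorem~\ref{theorem:birational}\eqref{enum:birationalVB}) to $P''_{\widehat{r-k}}$ makes $V_w$ into a $P''_{\widehat{r-k}}$-module, and the map $(g,v)\mapsto(\iota(g),v)$ descends to a bundle morphism $\GL_r\times^{P''_{\widehat{r-k}}} V_w\to\GL_n\times^{P'_{\widehat{r-k}}} V_w$. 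Its image is exactly the restriction over $\GL_r/P''_{\widehat{r-k}}$, and since both sides are vector bundles over $\GL_r/P''_{\widehat{r-k}}$ with fibre $V_w$ and the map is injective on fibres, it is the desired isomorphism.

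The only step that requires any care is the verification $\iota(\GL_r)\cap P'_{\widehat{r-k}}=\iota(P''_{\widehat{r-k}})$ together with the compatibility of the restricted $P''_{\widehat{r-k}}$-action via $g\circ v=g^{T}vg$; both reduce to routine block-matrix computations against the explicit description of $V_w$ in Remark~\ref{remark:identificationOfYPWasProduct}.
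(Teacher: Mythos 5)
Your proposal is correct and follows essentially the same route as the paper: part (a) via the standard Schubert-condition/dimension count (which the paper dismisses as ``clear''), and part (b) via the block embedding $\GL_r\hookrightarrow\GL_n$ sending $R$ to $\bigl[\begin{smallmatrix}R&0\\0&\Id_{n-r}\end{smallmatrix}\bigr]$, under which $V_w$ becomes a $P''_{\widehat{r-k}}$-module. You merely supply the routine verifications (the fibrewise bijectivity, $\iota(\GL_r)\cap P'_{\widehat{r-k}}=\iota(P''_{\widehat{r-k}})$) that the paper leaves implicit.
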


\begin{proof}
$\;$

\noindent ~\eqref{proposition:smallerGrass:Ident}: This is clear.

\noindent ~\eqref{proposition:smallerGrass:VB}: Consider the embedding $i:\GL_r \xhookrightarrow{} \GL_n$ given by
\begin{center}
$R\mapsto \left[\begin{array}{cc}
R & 0\\
0 & \Id_{n-r}
\end{array}\right]$ .
\end{center}
Define the action of $\GL_r$ on $\Sym_n$ as the action induced by this embedding. This induces an action of $P''_{\widehat{r-k}}$ on $\Sym_n$. As $i(P''_{\widehat{r-k}})\subset P'_{\widehat{r-k}}$, the $P'_{\widehat{r-k}}$ stability of $V_w$ implies the $P''_{\widehat{r-k}}$ stability of $V_w$. Hence our result follows.
\end{proof}

\begin{corollary}
\label{corollary:geometricTechniqueRealizationSp2n2}
For the open affine subvariety $Y_{P}(w)$ in $O_{G/P}^{-}$, we are able to realize the commutative diagram required for the geomtric technique as
\begin{center}
\begin{tikzcd}
Z_{\tilde P}(\tilde w) \arrow[hookrightarrow]{r} \arrow[d, "q'"]
& O^-_{G/P} \times \GL_r/P''_{\widehat{r-k}} \arrow[d, "q"] \arrow[r] 
& \GL_r/P''_{\widehat{r-k}}\\
Y_{P}(w) \arrow[hookrightarrow]{r} 
& O^-_{G/P}
\end{tikzcd}
\end{center}
\end{corollary}

\section{Free Resolutions}
\label{sec:freeresolutions}
\subsection*{The geometric technique of Kempf-Lascoux-Weyman}%

We recall the geometric technique, due originally to Kempf, for computing minimal free resolutions. For the full details of the construction as well as proofs and related results see \cite[Chapter~5]{WeymanCohVBSyz03}.

Suppose that $Y$ is a closed subvariety of the affine variety $\mathbb{A}$. Let $\mathbb{C}[Y]$ and $R$ be their respective coordinate rings; $R$ is a polynomial ring with homogeneous maximal ideal $\mathfrak{m}$. Suppose there exists a projective variety $V$ such that the commutative diagram
\begin{center}
\begin{tikzcd}
Z \arrow[hookrightarrow]{r} \arrow[d, "q'"]
& \mathbb{A}  \times V \arrow[d, "q"] \arrow[r] & V\\
Y \arrow[hookrightarrow]{r} 
& \mathbb{A}
\end{tikzcd}
\end{center}
may be constructed, with $q$ being first projection and its restriction $q'$ being a proper, birational map from a subbundle $Z$ of the trivial bundle $\mathbb{A} \times V$. Denote the dual of the quotient bundle on $V$ associated to the subbundle $Z$ by $\xi$. Then the following theorem holds.

\begin{theorem}[\protect{\cite[Theorem~5.1.2]{WeymanCohVBSyz03}}]
\label{theorem:geometrictechnique}
There is a finite complex $(F_\bullet,
\partial_\bullet)$ of finitely generated graded free $R$-modules that is
quasi-isomorphic to the right derived image $\RDer q'_* \mathscr{O}_{Z}$, with \[
F_i = \bigoplus_{j \geq 0} \homology^j(V, \bigwedge^{i+j} \xi)
\otimes_\mathbb{C} R(-i-j),
\]
and $\partial_i(F_i) \subseteq \mathfrak{m} F_{i-1}$.  Furthermore, the following
are equivalent:
\begin{enumerate}
\item $Y$ has rational singularities, that is  $\RDer q'_* \mathscr{O}_{Z}$ is 
quasi-isomorphic to the structure sheaf $\mathscr{O}_{Y}$;
\item $F_\bullet$ is a minimal $R$-free resolution of $\mathbb{C}[Y]$, that is, 
$F_0 \simeq R$ and $F_{-i} = 0$ for every $i > 0$.
\end{enumerate}
\end{theorem}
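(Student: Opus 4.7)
The plan is to build an explicit resolution of $\mathcal{O}_Z$ on $\bbA \times V$ by locally free sheaves, push it forward through $q$, and then extract a minimal complex whose terms have the asserted shape.

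First, I would construct a Koszul resolution on $\bbA \times V$. Since $Z \hookrightarrow \bbA \times V$ is a sub-bundle over $V$ cut out fibre-wise by linear equations in $\bbA$ (these equations being parametrized by $\xi$, the dual of the quotient bundle), the ideal sheaf of $Z$ admits a Koszul complex
\[
\mathcal{K}_\bullet : \cdots \to p_V^*\!\bigwedge^{i}\xi \otimes p_\bbA^*\strSh_\bbA(-i) \to \cdots \to p_\bbA^*\strSh_\bbA \to \strSh_Z \to 0,
\]
where the internal shift $(-i)$ records that the generators of the ideal are linear in the $\bbA$-direction. This is where the grading on $R$ enters: $R$ is identified with $\Gamma(\bbA,\strSh_\bbA)$, and $q_* \mathcal{K}_\bullet$ becomes a complex of graded $R$-modules.

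Second, I would apply $\RDer q_*$. Because $q$ is the trivial projection $\bbA \times V \to \bbA$, flat base change (equivalently the projection formula) gives
\[
\RDer q_*\bigl(p_V^*\!\bigwedge^{i}\xi \otimes p_\bbA^*\strSh_\bbA(-i)\bigr) \;\simeq\; \RDer\Gamma\bigl(V, \bigwedge^i \xi\bigr) \otimes_\complex R(-i).
\]
Splicing these together via a Cartan--Eilenberg resolution of $\mathcal{K}_\bullet$ and taking the associated total complex yields a double complex on $\bbA$ whose total cohomology computes $\RDer q'_* \strSh_Z$, and whose $E_1$ page is precisely $\homology^j(V, \bigwedge^{i}\xi) \otimes_\complex R(-i)$. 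Passing to a minimal representative of this complex in the category of graded free $R$-modules (using Nakayama's lemma to split off all trivially exact contributions, so that the remaining differential has image in $\frakm F_{i-1}$), and re-indexing via the substitution $i \mapsto i+j$ so that the total homological degree becomes the external index, produces the complex $F_\bullet$ with
\[
F_i = \bigoplus_{j \geq 0} \homology^j\bigl(V, \bigwedge^{i+j} \xi\bigr) \otimes_\complex R(-i-j)
\]
quasi-isomorphic to $\RDer q'_* \strSh_Z$.

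Finally, for the equivalence of the two conditions: if $Y$ has rational singularities, then by definition $\strSh_Y \to \RDer q'_* \strSh_Z$ is a quasi-isomorphism, so $F_\bullet$ is a minimal free resolution of $\complex[Y]$; minimality then forces $F_0 \simeq R$ and $F_{-i}=0$ for $i>0$. Conversely, if $F_\bullet$ has that shape, then $\homology^i(\RDer q'_* \strSh_Z) = 0$ for $i > 0$ and $\homology^0 = \complex[Y]$, which is the condition for rational singularities. I expect the main obstacle to be the passage from the (possibly non-minimal) total complex of the double complex to the genuinely minimal complex $F_\bullet$ while keeping careful track of the two gradings (Koszul degree $i$ and cohomological degree $j$), since it is the interaction between the internal $R$-grading shift $(-i)$ and the cohomological shift that produces the final indexing $R(-i-j)$.
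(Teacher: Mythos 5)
Your construction of $F_\bullet$ --- the Koszul resolution of $\strSh_Z$ built from $\xi$, pushforward along the trivial projection via a second-quadrant double complex, and extraction of a minimal subcomplex by Nakayama, with the reindexing $i\mapsto i+j$ producing the twist $R(-i-j)$ --- is exactly the route the paper (following Weyman) takes, and the forward direction of the equivalence is also argued the same way.

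The one genuine gap is in the converse direction (2)$\Rightarrow$(1), which is precisely the direction the paper flags as being ``only implicit'' in Weyman's proof. From $F_{-i}=0$ for $i>0$ you correctly conclude $R^iq'_*\strSh_Z=0$ for $i\geq 1$; but from $F_0\simeq R$ you may only conclude that $q'_*\strSh_Z$ is a \emph{cyclic} graded $R$-module, not that $\homology^0(\RDer q'_*\strSh_Z)=\complex[Y]$. A priori $\Gamma(Z,\strSh_Z)$ could be $R/I$ for an ideal $I$ strictly contained in $I(Y)$, and something must rule this out. The missing step: since $q'$ is proper, surjective and birational, $\complex[Y]\hookrightarrow\Gamma(Z,\strSh_Z)$ is an inclusion of rings agreeing generically; minimality identifies ``cyclic'' with ``$F_0$ free of rank one,'' and since $\homology^0(V,\bigwedge^0\xi)\otimes R=R(0)$ is a summand of $F_0$, rank one forces $F_0=R(0)$, so the single generator lies in degree $0$. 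The degree-zero part of $\Gamma(Z,\strSh_Z)$ is $\homology^0(V,\strSh_V)=\complex$ because $Z$ is a vector bundle over the projective, irreducible $V$; hence the generator is a nonzero scalar, $\Gamma(Z,\strSh_Z)=R\cdot 1=\complex[Y]$, and rational singularities follow. (The paper packages this as the chain: $q'_*\strSh_Z=\strSh_Y$ iff it is generated by one element as an $\strSh_Y$-module iff as an $R$-module iff $F_0$ has rank one iff $F_0=R(0)$.) Without some version of this argument the converse implication is not established.
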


A sketch of this proof is given in \cite[Section 4]{KLPSResolutionSchSing15}, and ~\cite[5.1.3]{WeymanCohVBSyz03} may be consulted for a more comprehensive account.

We would like to use Theorem \ref{theorem:geometrictechnique} to give a minimal free resolution for the opposite cells $Y_P(w)$, $w = \mathcal{W}_{k,r}$ for $1 \leq k < r \leq n$. We have constructed the required commutative diagram in Corollary \ref{corollary:geometricTechniqueRealizationSp2n2} and combining this with Theorem \ref{theorem:geometrictechnique} we have the following result.

\begin{theorem}
\label{theorem:stepone}
Write $\xi$ for the homogeneous vector bundle on $\GL_r/P''_{\widehat{r-k}}$ 
associated to the $P''_{\widehat{r-k}}$-module $\left(O^-_{G/P}/ V_w\right)^*$
(this is the dual of the quotient of $O^-_{G/P} \times \GL_r/P''_{\widehat{r-k}}$ by $Z_{\tilde{P}}(\tilde w)$). Let $R$ be the coordinate ring of $O^-_{G/P}$. Then the complex $(F_\bullet, \partial_\bullet)$ is a minimal $R$-free resolution of $\mathbb{C}[Y_P(w)]$ given by
\[
F_i = \oplus_{j \geq 0} \homology^j(\GL_r/P''_{\widehat{r-k}}, \bigwedge^{i+j} \xi)
\otimes_\mathbb{C} R(-i-j).
\]
\end{theorem}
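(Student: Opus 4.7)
The plan is to apply the Kempf-Lascoux-Weyman technique (Theorem~\ref{theorem:geometrictechnique}) directly to the commutative diagram produced by Corollary~\ref{corollary:geometricTechniqueRealizationSp2n2}, with the identifications $\bbA = O^-_{G/P}$, $V = \GL_r/P''_{\widehat{r-k}}$, $Y = Y_P(w)$, and $Z = Z_{\tilde P}(\tilde w)$, with $q'$ and $q$ as in that corollary. Since $R = \complex[O^-_{G/P}]$ is a polynomial ring (as $O^-_{G/P}$ is an affine space, cf.\ Remark~\ref{remark:identificationOMinusGP}), the output of the technique will automatically be a complex of graded free $R$-modules of the asserted form, once the hypotheses are verified.

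There are essentially three things to check in order to invoke Theorem~\ref{theorem:geometrictechnique}. First, one must verify that $Z_{\tilde P}(\tilde w) \hookrightarrow O^-_{G/P} \times \GL_r/P''_{\widehat{r-k}}$ is a sub-bundle of the trivial bundle over $V$. This is exactly the content of Theorem~\ref{theorem:birational}\eqref{enum:birationalVB} combined with Proposition~\ref{proposition:smallerGrass}\eqref{proposition:smallerGrass:VB}: the sub-bundle is the restriction to $\GL_r/P''_{\widehat{r-k}}$ of the homogeneous vector bundle associated to the $P''_{\widehat{r-k}}$-module $V_w \subseteq O^-_{G/P}$. Second, one must identify the bundle $\xi$ of the theorem. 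By construction, $\xi$ is the dual of the quotient bundle $(O^-_{G/P} \times V)/Z_{\tilde P}(\tilde w)$; since the inclusion of $Z_{\tilde P}(\tilde w)$ into the trivial bundle is the one coming from the homogeneous sub-bundle associated to $V_w \subseteq O^-_{G/P}$, the quotient is the homogeneous bundle associated to the quotient $P''_{\widehat{r-k}}$-module $O^-_{G/P}/V_w$, and its dual is the bundle associated to $(O^-_{G/P}/V_w)^*$, exactly as in the statement.

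The third and most substantive point is to verify that the natural morphism $\strSh_{Y_P(w)} \to \RDer q'_*\, \strSh_{Z_{\tilde P}(\tilde w)}$ is a quasi-isomorphism, which by Theorem~\ref{theorem:geometrictechnique} is equivalent to the resulting complex $F_\bullet$ being an actual minimal free resolution starting at $F_0 = R$. For this I would invoke Theorem~\ref{theorem:birational}\eqref{enum:birationalMap}: the map $q' : Z_{\tilde P}(\tilde w) \to Y_P(w)$ is a proper birational morphism, and $Z_{\tilde P}(\tilde w)$ is smooth (Definition~\ref{definition:ZPW}), so $q'$ is a desingularization of $Y_P(w)$. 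Since $Y_P(w)$ is an open subvariety of the Schubert variety $X_P(w)$, and Schubert varieties in $G/P$ are well known to have rational singularities, the map $q'$ satisfies $\strSh_{Y_P(w)} \xrightarrow{\sim} \RDer q'_* \strSh_{Z_{\tilde P}(\tilde w)}$. The main obstacle here is really only one of citation/bookkeeping, namely pinning down the rational singularities statement for the particular Schubert varieties in the Lagrangian Grassmannian setting; once this is in hand, the Theorem follows by assembling Theorem~\ref{theorem:geometrictechnique} with the identifications above, and the formula for $F_i$ is just a transcription of the formula in Theorem~\ref{theorem:geometrictechnique}.
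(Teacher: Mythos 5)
Your proposal is correct and takes essentially the same approach as the paper: the paper simply states that Theorem~\ref{theorem:geometrictechnique} together with Corollary~\ref{corollary:geometricTechniqueRealizationSp2n2} yield the result, with the rational-singularities/desingularization justification given in the introduction (after Diagram~\eqref{equation:KLWdiagram}). Your version merely spells out the bookkeeping — the sub-bundle structure from Theorem~\ref{theorem:birational}\eqref{enum:birationalVB} and Proposition~\ref{proposition:smallerGrass}\eqref{proposition:smallerGrass:VB}, the identification of $\xi$, and the quasi-isomorphism via rational singularities — that the paper leaves implicit.
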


Computing the cohomology groups found in Theorem~\ref{theorem:stepone} 
is a difficult problem. Techniques for computing them are discussed in the following section.
\section{Cohomology of Homogeneous Vector-Bundles}
\label{sec:cohomologyHVBStepTwo}

We have shown in Theorem ~\ref{theorem:stepone} that the calculation of a minimal $R$-free resolution of $\mathbb{C}[Y_P(w)]$ comes down to the computation of the cohomology of certain homogeneous bundles over $\GL_r/P''_{\widehat{r-k}}$. In particular we need to calculate 
\begin{equation}
\label{equation:cohomologyVB}
\homology^{\bullet}(\GL_r/P''_{\widehat{r-k}}, \bigwedge^{t} \xi)
\end{equation}
for arbitrary $t$.

The $P''_{\widehat{r-k}}$-module $\left(O^-_{G/P}/ V_w\right)^*$ is not completely reducible (the unipotent radical of $P''_{\widehat{r-k}}$ does not act trivially), and thus we can not use the Bott-algorithm to compute its cohomology. In \cite{OttavianiRubeiQuiversCohVB06} the authors determine the cohomology of general homogeneous bundles on Hermitian symmetric spaces. As $\GL_r/P''_{\widehat{r-k}}$ is such a space their results could be used to determine \eqref{equation:cohomologyVB}. In practice proceeding along these lines is possible though extremely complicated.

Another approach to the calculation of these cohomologies comes from using a technique employed in \cite[Chapter 6.3]{WeymanCohVBSyz03}. There the minimal $R$-free resolution of a related space is computed and the minimal $R$-free resolution of $\mathbb{C}[Y_P(w)]$ can be seen as a sub-resolution. In \cite{WeymanCohVBSyz03} this method is used for the case when $n=r$. That is, the case where $Y_P(w)$ is the symmetric determinantal variety. In this case the authors assume that $k=2u$(the odd case can be reduced to this even case). They look at the subspace $T_w$ of $\Sym_n$ given by symmetric matrices of block form
\begin{center}
$\left[\begin{array}{cc}
0_{n-u \times n-u} & R\\
R^{T} & S_{u \times u}
\end{array}\right]$
\end{center}
Let $P'_{\widehat{n-u}}$ be the parabolic subgroup of $\GL_n$ omitting the root $\alpha_{n-u}$, then $T_w$ is a $P'_{\widehat{n-u}}$-module under the same action. If $Z_w$ is the homogeneous vector bundle associated with $T_w$ we have the following diagram
\begin{center}
\begin{tikzcd}
Z_w \arrow[hookrightarrow]{r} \arrow[d, "q'"]
& \Sym_n  \times \GL_n/P'_{\widehat{n-u}} \arrow[d, "q"] \arrow[r] & \GL_n/P'_{\widehat{n-u}}\\
Y \arrow[hookrightarrow]{r} 
& \Sym_n
\end{tikzcd}
\end{center}
They show that the resolution of $\mathbb{C}[Y_P(w)]$ can be realized as a sub-resolution of the resolution of $\mathbb{C}[Y]$. In this case, the $P'_{\widehat{n-u}}$-module $(\Sym_n/T_w)^{*}$(this is the dual of the quotient of $\Sym_n  \times \GL_n/P'_{\widehat{n-u}}$ by $Z_w$) is completely reducible and thus the cohomology of the corresponding homogeneous vector bundles $\bigwedge^{t} \xi$  may be computed using the Bott-algorithm, leading to
\begin{theorem}
\cite[Theorem 6.3.1(c)]{WeymanCohVBSyz03} The $i$th term $G_i$ of the minimal free resolution of $\mathbb{C}[Y_P(w)]$ as an $R$ module is given by the formula 
\begin{center}
$G_i=\bigoplus\limits_{\substack{\lambda \in Q_{k-1}(2t) \\ \mathrm{rank}\lambda\;\mathrm{even} \\ i=t-k\frac{1}{2}\mathrm{rank}\lambda}} \Schur_{\chk{\lambda}}\mathbb{C}^{n} \otimes_{\mathbb{C}} R$ 
\end{center}

\end{theorem}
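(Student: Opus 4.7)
The plan is to apply Theorem~\ref{theorem:stepone} (the geometric technique) to the larger sub-bundle $Z_w \hookrightarrow \Sym_n \times \GL_n/P'_{\widehat{n-u}}$ rather than $Z_{\tilde P}(\tilde w)$, exploiting the fact that the corresponding quotient module $(\Sym_n/T_w)^{*}$ is completely reducible as a $P'_{\widehat{n-u}}$-module (since $T_w$ contains the entire block $S_{u\times u}$, the unipotent radical of $P'_{\widehat{n-u}}$ acts trivially on the quotient). The desired resolution will then be extracted as a sub-resolution in the sense of \cite[Ch.~6]{WeymanCohVBSyz03}: one first checks that the map $Z_w \to \Sym_n$ has image contained in (and generically mapping onto) the closure of $Y_P(w)$, and that the $R$-linear strand indexed by $GL_n$-representations occurring in $\complex[Y_P(w)]$ matches the strand in $\complex[Y]$ that is invariant under the relevant torus weights.

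First I would identify $\xi := (\Sym_n/T_w)^{*}$ as a homogeneous bundle on $\GL_n/P'_{\widehat{n-u}}$. Concretely, decomposing $\Sym_n$ into the three blocks of the matrix partition shows $\Sym_n/T_w \cong \Sym_{n-u}(\complex^{n-u})$ sitting in the top-left corner, so $\xi \cong \Sym^2 \calQ_{n-u}^{*}$ where $\calQ_{n-u}$ is the tautological quotient bundle on $\GL_n/P'_{\widehat{n-u}}$. Next, by Cauchy's formula one expands
\[
\bigwedge^{t}\!\bigl(\Sym^{2}\calQ_{n-u}^{*}\bigr) \;\cong\; \bigoplus_{\lambda} \Schur_{\lambda}\calQ_{n-u}^{*},
\]
where the sum is over self-conjugate partitions $\lambda$ (equivalently, those with all even column lengths, up to conjugation) with $|\lambda|=2t$ and at most $n-u$ parts. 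This is the point at which the index set $Q_{k-1}(2t)$ (partitions with $\leq k-1$ distinct part sizes, rank-condition encoding self-conjugacy) enters the statement.

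The main step is then to compute $\homology^{j}(\GL_n/P'_{\widehat{n-u}}, \Schur_{\lambda}\calQ_{n-u}^{*})$ by the Bott algorithm as reviewed in Section~\ref{subsection:Preliminaries:homogVectorBundles}: pad $\lambda$ with zeros, cyclically permute past the $u$ trailing zeros, and track sign changes. The hard part will be bookkeeping: determining for which $\lambda$ the Bott sequence $\lambda' = (\lambda_{n-u+1},\ldots,\lambda_n,\lambda_1,\ldots,\lambda_{n-u})$ is either strictly non-increasing after $j$ exchanges (contributing $\Schur_{\chk{\lambda}}\complex^n$ in cohomological degree $j$) or forces a repeat (contributing zero). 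A careful analysis pins down the vanishing: only those $\lambda$ of even rank in the Durfee sense, satisfying $\mathrm{rank}\,\lambda \leq k-1$, produce a surviving term, and the cohomological degree simplifies to $j = t - k \cdot \tfrac{1}{2}\mathrm{rank}\,\lambda$ (after accounting for the shift between the index on $F_\bullet$ and $j$ in Theorem~\ref{theorem:stepone}).

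Finally, I would verify minimality and identify the resolution with that of $\complex[Y_P(w)]$. Minimality is automatic from Theorem~\ref{theorem:geometrictechnique} once one checks that $Y$ has rational singularities (standard for determinantal varieties, or by reduction to the Schubert case via Proposition~\ref{proposition:schubertVarietySP2nSmooth}). To pass from the resolution of $\complex[Y]$ to that of $\complex[Y_P(w)]$, I would use the $\GL_n$-equivariance of all constructions and match Schur summands: the strand appearing in $G_i$ is exactly the $\GL_n$-isotypic component indexed by $\chk{\lambda}$ with the stated conditions on $\lambda$, yielding the formula
\[
G_i \;=\; \bigoplus_{\substack{\lambda \in Q_{k-1}(2t)\\ \mathrm{rank}\,\lambda\ \text{even}\\ i = t - k\cdot \tfrac{1}{2}\mathrm{rank}\,\lambda}} \Schur_{\chk{\lambda}}\complex^{n} \otimes_{\complex} R.
\]
The principal obstacle, as indicated, is the combinatorial analysis of the Bott algorithm on self-conjugate partitions and the matching of summands in the sub-resolution extraction; the rest is formal once the geometric setup is in place.
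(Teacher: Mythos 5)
You should first be aware that the paper offers no proof of this statement at all: it is quoted directly from Weyman's Theorem 6.3.1(c), and the surrounding text only summarizes the mechanism (replace the Schubert desingularization by the bundle $Z_w$ associated to $T_w$, note that $(\Sym_n/T_w)^*$ is completely reducible, compute with the Bott algorithm, and extract the resolution of $\complex[Y_P(w)]$ as a sub-resolution of that of $\complex[Y]$, where $Y$ is the image of $Z_w$). Your outline has the same architecture, but two of its concrete claims are wrong and the substantive computation is deferred. The Cauchy-type decomposition of $\bigwedge^{t}(\Sym^2 E)$ is not indexed by self-conjugate partitions: it is $\bigoplus_{\mu}\Schur_{\mu}E$ with $\mu$ running over partitions of $2t$ whose hook (Frobenius) coordinates satisfy $a_j=b_j+1$ — i.e.\ the set $Q_1(2t)$ in the notation of the theorem. (Check $\bigwedge^2(\Sym^2\complex^2)=\Schur_{(3,1)}\complex^2$; the partition $(3,1)$ is not self-conjugate.) Relatedly, $Q_{k-1}(2t)$ is not ``partitions with at most $k-1$ distinct part sizes''; as defined right after the theorem, it is the set of partitions of $2t$ with $a_j=b_j+(k-1)$ in hook notation. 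The passage from the $Q_1$-type partitions supplied by Cauchy to the $Q_{k-1}$-type partitions indexing $G_i$ is precisely what the Bott exchanges accomplish, and that bookkeeping — which you explicitly postpone as ``the hard part'' — is the entire content of the computation, not a detail.

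The second gap is structural. You attribute the condition ``$\mathrm{rank}\,\lambda$ even'' to vanishing in the Bott algorithm (``only those $\lambda$ of even rank \dots produce a surviving term''). That is not where it comes from in the argument being cited: the complex $F_\bullet$ attached to $Z_w\to Y$ has nonzero terms for partitions of both parities of rank, and the point of the phrase ``sub-resolution'' is that this complex splits as a direct sum of two subcomplexes, with the even-rank summand being the minimal resolution of $\complex[Y_P(w)]$ and the odd-rank summand resolving a different module. Your proposed ``matching of $\GL_n$-isotypic components'' does not produce this splitting: one must show the differential respects the decomposition by rank parity and then identify which summand has $F_0\simeq R$ and resolves $\complex[Y_P(w)]$ rather than $\complex[Y]$ or the complementary module. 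As written, your argument would not recover the restriction to even rank, which is exactly the feature that distinguishes the symmetric (type C) answer from the generic determinantal one.
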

\noindent Here $Q_{k-1}(2t)$ is the set of partitions $\lambda$ of $2t$ which in hook notation can be written as $\lambda = (a_1, . . . , a_{s} |b_1, . . . , b_{s} )$, where $s$ is a positive integer, and for each $j$ we have $a_j = b_j + (k-1)$. And $\chk{\lambda}$  is the conjugate (or dual) partition of $\lambda$. And finally, rank$\lambda$ is defined as being equal to $l$, where the largest square fitting inside $\lambda$ is of size $l \times l$.

Similar methods may be used to compute a closed form formula for the minimal free resolution of $\mathbb{C}[Y_P(w)]$ as an $R$ module in the case $r\neq n$.

\section*{Acknowledgements}
The authors thank the referee for some useful comments. In addition, the authors thank Manoj Kummini for helpful discussions.


\end{document}